\numberwithin{equation}{section}
\newtheorem{thm}{Theorem}[section]
\newtheorem{lem}[thm]{Lemma}
\newtheorem{prop}[thm]{Proposition}
\newtheorem{cor}[thm]{Corollary}
\theoremstyle{definition}
\theoremstyle{definition}
\theoremstyle{definition}\newtheorem*{remark}{Remark}
\newcommand{\norm}[1]{\left\lVert#1\right\rVert} 
\newcommand{\RR}{\mathbb{R}}            
\newcommand{\CC}{\mathbb{C}}
\newcommand{\ZZ}{\mathbb{Z}}
\newcommand{\Intr}{\displaystyle\int}
\newcommand{\Sumn}{\displaystyle\sum}
\newcommand{\Suma}{\Sumn_{\alpha\in R_+}}
\newcommand*\diff{\mathop{}\!\mathrm{d}}
\newcommand{\al}{\alpha}
\newcommand{\alx}{\langle\alpha,x\rangle}
\newcommand{\DT}{\mathcal{D}_k}
\newcommand{\IntN}{\Intr_{\RR^N}}
\newcommand{\1}{\mathds{1}}
\newcommand\restr[2]{{
  \left.\kern-\nulldelimiterspace  #1 
  \vphantom{\big|} 
  \right|_{#2} 
  }}
\begin{document}

\title{Sobolev-Type Inequalities for Dunkl Operators}
\author{Andrei Velicu}
\address{Andrei Velicu, Department of Mathematics, Imperial College London, Huxley Building, 180 Queen's Gate, London SW7 2AZ, UK}
\email{a.velicu15@imperial.ac.uk}
\date{}

\begin{abstract}
In this paper we study the Sobolev inequality in the Dunkl setting using two new approaches which provide a simpler elementary proof of the classical case $p=2$, as well as an extension to the coefficient $p=1$ that was previously unknown. We also find estimates of the sharp constants for the Sobolev inequality for Dunkl gradient. Related inequalities and some improvements are also considered (Nash inequality, Besov space embeddings).
\end{abstract}

\maketitle

\section{Introduction}

The classical Sobolev inequality states that
\begin{equation} \label{classicalsobolev}
\norm{f}_q \leq C \norm{\nabla f}_p \qquad \forall f\in C_c^\infty (\RR^N) 
\end{equation}
where $1\leq p <N$ and $q=\frac{Np}{N-p}$. This is a fundamental result in analysis and it has been widely studied in a variety of contexts, see e.g. the classical references \cite{Mazya}, \cite{SC}. 

We will be concerned with this inequality (and related inequalities) in the context of Dunkl theory. The general approach to inequality (\ref{classicalsobolev}) is to represent $f$ as an integral expression involving $\nabla f$. In the Dunkl setting, this type of representation is given by the Riesz transform and Riesz potential, and indeed the Sobolev inequality  for $1<p<N+2\gamma$ was obtaind as a corollary of this theory in \cite{AS} (see also \cite{HMS}).

Combining our results with the existing results of \cite{AS}, we have obtained the Sobolev inequality for Dunkl operators in full generality. That is, we have the following Theorem.

\begin{thm} (Sobolev inequality)
Let $1 \leq p < N+2\gamma$ and $q=\frac{p(N+2\gamma)}{N+2\gamma-p}$. Then there exists a constant $C>0$ such that we have the inequality 
$$ \norm{f}_q \leq C \norm{\nabla_k f}_p \qquad \forall f\in C_c^\infty(\RR^N).$$
\end{thm}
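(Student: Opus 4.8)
The plan is to split the range $1\le p<N+2\gamma$ into $1<p<N+2\gamma$, which is already available, and the endpoint $p=1$, which needs a genuinely different argument because the standard tools degenerate there; put $D:=N+2\gamma$. For $1<p<D$ nothing new is required: this is the corollary of \cite{AS} recalled in the introduction. Concretely, \cite{AS} gives the boundedness of the Dunkl-Riesz potential $I_1=(-\Delta_k)^{-1/2}$ from $L^p$ to $L^q$ (Hardy--Littlewood--Sobolev, $\tfrac1q=\tfrac1p-\tfrac1D$) and of the Dunkl-Riesz transforms $R_j=T_jI_1$ on $L^p$, $1<p<\infty$; since $f=-\sum_{j=1}^{N}I_1R_jT_jf$ --- an identity immediate on the Schwartz class, $I_1$ and $R_j$ being Dunkl-transform multipliers --- one gets $\norm{f}_q\le\sum_j\norm{I_1}_{L^p\to L^q}\norm{R_j}_{L^p\to L^p}\norm{T_jf}_p\lesssim\norm{\nabla_kf}_p$. (For $p=2$ one can avoid singular integrals entirely: Plancherel for the Dunkl transform gives $\norm{(-\Delta_k)^{1/2}f}_2=\norm{\nabla_kf}_2$, and $\norm{f}_q\lesssim\norm{(-\Delta_k)^{1/2}f}_2$ is $I_1\colon L^2\to L^q$, which follows by Hedberg's trick from the on-diagonal bound $\norm{e^{t\Delta_k}}_{L^1\to L^\infty}\lesssim t^{-D/2}$ --- itself an immediate consequence of the one-line Dunkl Nash inequality; this refinement is not needed for the Theorem.) It remains to prove the endpoint $p=1$, $q=\tfrac{D}{D-1}$.

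For $p=1$ I would work through the Dunkl heat semigroup $H_t=e^{t\Delta_k}$, whose $L^p$ estimates, unlike those of the Riesz operators, do not blow up as $p\to1$. Two analytic inputs are needed: (i) ultracontractivity, $\norm{H_t}_{L^1\to L^\infty}\lesssim t^{-D/2}$, which is immediate from R\"osler's explicit formula for the Dunkl heat kernel $h_t(x,y)$ (or from the Dunkl Nash inequality); and (ii) the ``gradient of the heat kernel'' bound $\norm{T_jH_t}_{L^1\to L^1}\lesssim t^{-1/2}$, i.e.\ $\sup_x\int_{\RR^N}|(T_j)_x h_t(x,y)|\,\diff\mu_k(y)\lesssim t^{-1/2}$. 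Since $H_t$ is self-adjoint, each $T_j$ is skew-adjoint and $T_jH_t=H_tT_j$ (both Dunkl multipliers), (ii) upgrades to $\norm{T_jH_t}_{L^\infty\to L^\infty}\lesssim t^{-1/2}$ and hence to $\norm{T_jH_t}_{L^1\to L^\infty}\lesssim t^{-(D+1)/2}$ by the semigroup property. Inserting these into $f-H_tf=-\int_0^t\sum_jT_jH_sT_jf\,\diff s$ and $H_tf=-\int_t^\infty\sum_jT_jH_sT_jf\,\diff s$ (the tail converges precisely because $D>1$) produces, for every $t>0$, the pseudo-Poincar\'e inequality $\norm{f-H_tf}_1\lesssim t^{1/2}\norm{\nabla_kf}_1$ and the decay bound $\norm{H_tf}_\infty\lesssim t^{-(D-1)/2}\norm{\nabla_kf}_1$.

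These two estimates deliver the endpoint in the spirit of the Ledoux--G\'erard--Meyer--Oru improved Sobolev inequality. For $\lambda>0$, choose $t=t(\lambda)$ with $\norm{H_{t(\lambda)}f}_\infty\le\lambda/2$, so that $t(\lambda)^{1/2}\lesssim(\norm{\nabla_kf}_1/\lambda)^{1/(D-1)}$; then $\mu_k(|f|>\lambda)\le\mu_k(|f-H_{t(\lambda)}f|>\lambda/2)\lesssim\lambda^{-1}t(\lambda)^{1/2}\norm{\nabla_kf}_1$ by Chebyshev and the pseudo-Poincar\'e inequality, whence $\lambda^{q}\mu_k(|f|>\lambda)\lesssim\norm{\nabla_kf}_1^{q}$: the weak endpoint estimate $\norm{f}_{L^{q,\infty}}\lesssim\norm{\nabla_kf}_1$. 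To pass from this to the strong inequality $\norm{f}_q\lesssim\norm{\nabla_kf}_1$ one would run the Maz'ya truncation scheme --- apply the weak bound to the dyadic truncations $u_m=\min\bigl((|f|-2^m)_+,\,2^m\bigr)$ and sum ---, the crucial point being the control of $\sum_m\norm{\nabla_ku_m}_1$ by $\norm{\nabla_kf}_1$.

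I expect the genuine obstacles to be exactly these two points. For (ii): because the Dunkl operators are non-local, $(T_j)_x h_t(x,y)$ is not a derivative of a Gaussian, so the sharp bound $\sup_x\int_{\RR^N}|(T_j)_x h_t(x,y)|\diff\mu_k(y)\lesssim t^{-1/2}$ must be extracted from the known Gaussian-type estimates for the Dunkl heat kernel and its Dunkl gradient. For the truncation step: the difference part of $T_j$ behaves badly under composition with a $1$-Lipschitz $\varphi$, and the natural telescoping identity only yields $\sum_m|T_ju_m(x)|\le|\partial_jf(x)|+\sum_{\alpha\in R_+}k(\alpha)|\la\alpha,e_j\ra|\,\bigl|f(x)-f(\sigma_\alpha x)\bigr|/\bigl|\la\alpha,x\ra\bigr|$, i.e.\ it controls the absolute-value majorant of the Dunkl gradient but not $\nabla_kf$ itself; so the classical weak-to-strong argument does not transfer verbatim, and this passage has to be carried out instead within the scale of Dunkl-Besov spaces --- which is precisely why the paper develops that theory. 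Once (i)--(ii) and the basic Dunkl-Besov estimates are in place, the remaining work is soft.
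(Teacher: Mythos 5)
Your architecture matches the paper's: the range $1<p<N+2\gamma$ is quoted from \cite{AS} exactly as you describe, and the endpoint $p=1$ is attacked by Ledoux's heat-semigroup method (pseudo-Poincar\'e inequality $\Rightarrow$ weak-type bound $\Rightarrow$ strong type), which is precisely the strategy of Section 5. But the two steps you yourself flag as ``the genuine obstacles'' are exactly the ones that carry the whole weight of the $p=1$ case, and the routes you propose for them are not the ones used here. First, you derive the $L^1$ pseudo-Poincar\'e inequality from the kernel estimate $\sup_x\int|(T_j)_xh_t(x,y)|\,\diff\mu_k(y)\lesssim t^{-1/2}$. Because $T_j$ is non-local, this does not follow from the Gaussian bounds (\ref{heatkernelbounds}); it is a substantial estimate requiring a separate proof, and nothing in your proposal supplies it. The paper avoids it entirely: Proposition \ref{pseudopoincare} is proved by duality, pairing $f-P_tf$ against $g$ with $\norm{g}_{p^*}\le1$, $p^*\ge2$, and controlling $\norm{\nabla_kP_sg}_{p^*}$ through the carr\'e-du-champ operator --- the pointwise bound $\Gamma(f)\ge C|\nabla_kf|^2$ (Proposition \ref{Gammaboundgradient}) together with the reverse local Poincar\'e inequality $P_s(g^2)-(P_sg)^2\ge Cs|\nabla_kP_sg|^2$ (Lemma \ref{reversePoincare}). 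This needs only $|\nabla_kP_tf|\le CP_t(|\nabla_kf|)$, which comes from commutativity and Cauchy--Schwarz rather than from differentiating the kernel; it is also the reason the method is restricted to $1\le p\le 2$ (one needs $p^*\ge 2$ for Jensen's inequality).

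Second, the weak-to-strong upgrade. You correctly observe that the Maz'ya truncation scheme does not transfer verbatim, because for the non-local gradient $\sum_m\norm{\nabla_ku_m}_1$ is not controlled by $\norm{\nabla_kf}_1$; but you then leave the repair as something ``to be carried out within the scale of Dunkl-Besov spaces'', which is a statement of intent rather than an argument. The paper's actual resolution (Steps 2--3 of the proof of Theorem \ref{besovineq}) uses a single-scale truncation $\tilde f_t=(f-t)_+\wedge(ct)+(f+t)_-\vee(-ct)$ inside the layer-cake representation: the error term proportional to $\int|f|^q\diff\mu_k$ that this truncation produces is absorbed into the left-hand side by choosing $c$ large, and a separate limiting argument (Step 3, resting on the weak inequality of Step 1) removes the a priori assumption $f\in L^q$. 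Your closing assessment that ``the remaining work is soft'' once (i)--(ii) are in place therefore understates matters: (i) is indeed available (Proposition \ref{ultracontractivity}), but (ii) and the truncation step are the substance of the endpoint case, and as written your proposal proves neither.
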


In this paper we explore different approaches that provide simpler proofs, or which improve on the existing results. We will first prove the Nash inequality in the Dunkl setting and use this to obtain an elementary proof of the Sobolev inequality in the classical case $p=2$. Nash's inequality is another important result in analysis and it was first proved in \cite{N} where it was used in the study of parabolic and elliptic equations. Nash's inequality can be seen as a weaker version of the Sobolev inequality as it can be deduced from the latter using only H\"older's inequality, but the two are in fact equivalent. We will prove this in our context using a nice elementary result of \cite{BCLS}. 

Using a different approach, based on a pseudo-Poincar\'e inequality and a method of Ledoux \cite{L}, we will prove a more general Besov space result.  This implies in particular  the Sobolev inequality in the case $1\leq p \leq 2$ (the limitation $1\leq p \leq 2$ comes from the pseudo-Poincar\'e inequality). Note that this includes the case $p=1$ which was not known before. A Gagliardo-Nirenberg inequality is also obtained.

Finally, we consider the problem of estimating the best constant in the Sobolev inequality. In the classical case (inequality (\ref{classicalsobolev})), this amounts to finding the supremum
$$ C = \displaystyle\sup_{f\in C_c^1(\RR^N)} \frac{\norm{f}_q}{\norm{\nabla f}_p},$$
and the question was answered in \cite{T} and \cite{A}. In the paper of Talenti, it was shown that replacing $f$ by its symmetric decreasing rearrangement $f^*$ increases the fraction in the definition of $C$, so it is enough to consider the supremum over radial functions. This was done using using the P\'olya-Szeg\H{o} inequality 
\begin{equation} \label{polyaszegoclassical}
\norm{\nabla f^*}_p \leq \norm{\nabla f}_p.
\end{equation}
which holds for all $p>1$. This simplifies the problem to that of  maximising a functional over a space of functions defined on the real positive half-line. 

Throughout this paper we use a simple but very useful result based on the carr\'e-du-champ operator which provides a link between the $L^2$ norms of the usual gradient $\nabla f$, and the Dunkl gradient $\nabla_k f$. Namely, we have
$$ \norm{\nabla f}_2 \leq \norm{\nabla_k f}_2.$$
This is particularly useful in the present paper in estimating the best constant in the Dunkl Sobolev inequality by linking it to its weighted counterpart involving only the usual gradient. 

We deal with this question in Section 6. First we prove an isoperimetric inequality for the weighted Dunkl measure $\mu_k$, and using this we can appeal to a general result of Talenti \cite{T2}, who proves a weighted version of the P\'olya-Szeg\H{o} inequality. This provides us with a weighted rearrangement inequality that holds for usual gradient $\nabla f$, on Weyl chambers. In particular, we find precise best constant for the weighted Sobolev inequality in this classical case. We will then connect this inequality to Dunkl operators by exploiting properties of Dirichlet forms, as discussed above.  

The structure of the paper is as follows. In Section 2 we present a brief introduction to the classical theory of Dunkl operators. Section 3 introduces the carr\'e-du-champ operator and we also prove some related results that will be very important in all subsequent proofs. In section 4 we prove Nash's inequality and then deduce the Sobolev inequality in case $p=2$. In Section 5 we prove the pseudo-Poincar\'e inequality and the Besov space inequality. Finally, in Section 6 we prove the isoperimetric inequality and the rearrangement inequality, and find exact best constants for the weighted Sobolev inequality for usual partial derivatives. This is then used to deduce important results about best constants for the Sobolev inequality in the Dunkl setting. 

\section{Preliminaries}

In this section we will present a very quick introduction to Dunkl operators. For more details see \cite{H} for the general theory of root systems, and the survey papers \cite{Rosler} and \cite{Anker} for an overview of Dunkl theory.

A root system is a finite set $R\subset \RR^N\setminus \{0\}$ such that $R \cap \alpha \RR = \{ -\alpha, \alpha\}$ and $\sigma_\alpha(R) = R$ for all $\alpha\in R$. Here $\sigma_\alpha$ is the reflection in the hyperplane orthogonal to the root $\alpha$, i.e.,
$$ \sigma_\alpha x = x - 2 \frac{\alx}{\langle \alpha,\alpha \rangle} \alpha.$$
The group generated by all the reflections $\sigma_\alpha$ for $\alpha\in R$ is a finite group, and we denote it by $G$. 

Let $E$ be the set of all functions $\epsilon: R_+ \to \{ -1,1\}$, and for each $\epsilon\in E$ let 
$$ \RR^N_\epsilon = \{ x\in\RR^N : \text{sgn}(\alx)=\epsilon(\alpha) \text{ for all } \alpha\in R_+\}.$$
The Weyl chambers associated to the root system $R$ are the connected components of $\RR^N \setminus \{x\in\RR^N : \alx =0 \text{ for some } \alpha\in R \}$. Weyl chambers are all of the form $\RR^N_\epsilon$ for some $\epsilon\in E$; equivalently, if $\RR^N_\epsilon \neq \{ 0 \}$, then $\RR^N_\epsilon$ is a Weyl chamber. It can be checked that the reflection group $G$ acts simply transitively on the set of Weyl chambers so, in particular, the number of Weyl chambers equals the order of the group, $|G|$.

Let $k:R \to [0,\infty)$ be a $G$-invariant function, i.e., $k(\alpha)=k(g\alpha)$ for all $g\in G$ and all $\alpha\in R$. We will normally write $k_\alpha=k(\alpha)$ as these will be the coefficients in our Dunkl operators. We can write the root system $R$ as a disjoint union $R=R_+\cup (-R_+)$, and we call $R_+$ a positive subsystem;  this decomposition is not unique, but the particular choice of positive subsystem does not make a difference in the definitions below because of the $G$-invariance of the coefficients $k$.

From now on we fix a root system in $\RR^N$ with positive subsystem $R_+$. We also assume without loss of generality that $|\alpha|^2=2$ for all $\al
\in R$. For $i=1,\ldots, N$ we define the Dunkl operator on $C^1(\RR^N)$ by
$$ T_i f(x) = \partial_i f(x) + \Suma k_\alpha \alpha_i \frac{f(x)-f(\sigma_\alpha x)}{\alx}.$$
We will denote by $\nabla_k=(T_1,\ldots, T_N)$ the Dunkl gradient, and $\Delta_k = \displaystyle\sum_{i=1}^N T_i^2$ will denote the Dunkl laplacian. Note that for $k=0$ Dunkl operators reduce to partial derivatives, and $\nabla_0=\nabla$ and $\Delta_0=\Delta$ are the usual gradient and laplacian.

We can express the Dunkl laplacian in terms of the usual gradient and laplacian using the following formula:
\begin{equation} \label{Dunkllaplacian}
\Delta_k f(x) = \Delta f(x) + 2\Suma k_\alpha \left[ \frac{\langle \nabla f(x),\alpha \rangle}{\alx} - \frac{f(x)-f(\sigma_\alpha x)}{\alx^2} \right].
\end{equation}

The weight function naturally associated to Dunkl operators is
$$ w_k(x) = \prod_{\alpha\in R_+} |\alx|^{2k_\alpha}.$$
This is a homogeneous function of degree
$$ \gamma := \Suma k_\alpha.$$
We will work in spaces $L^p(\mu_k)$, where $\diff\mu_k = w_k(x) \diff x$ is the weighted measure; the norm of these spaces will be written simply $\norm{\cdot}_p$. With respect to this weighted measure we have the integration by parts formula
$$ \IntN T_i(f) g \diff\mu_k = - \IntN f T_i(g) \diff\mu_k.$$

The Macdonald-Mehta integral associated to the root system $R$ is defined as
$$ M_k = \IntN e^{-|x|^2/2} \diff\mu_k(x).$$ 
Macdonald conjectured in \cite{M} that 
\begin{equation} \label{macdonald-mehta} 
M_k = (2\pi)^{N/2} \displaystyle\prod_{\alpha\in R_+} \frac{\Gamma(2k_\alpha+1)}{\Gamma(k_\alpha+1)}.
\end{equation}
He also proved the result for infinite classes of root systems. Later, Opdam proved in \cite{O} the result in general for all crystallographic root systems. Finally, a proof in full generality was given by Etingof in \cite{E}.

An important function associated with Dunkl operators is the Dunkl kernel $E_k(x,y)$, defined on $\CC^N\times\CC^N$, which acts as a generalisation of the exponential and is defined, for fixed $y\in\CC^N$, as the unique solution $Y=E_k(\cdot, y)$ of the equations
$$ T_iY=y_i Y, \qquad i=1,\ldots N,$$
which is real analytic on $\RR^N$ and satisfies $Y(0)=1$. Another definition of the Dunkl exponential can be given in terms of the intertwining operator $V_k$ which connects Dunkl operators to usual derivatives via the relation
$$ T_i V_k = V_k \partial_i.$$
The Dunkl exponential can then be equivalently defined as
$$ E_k(x,y) = V_k \left( e^{\langle \cdot, y \rangle} \right) (x).$$

The following growth estimates on $E_k$ are known: for all $x\in\RR^N$, $y\in\CC^N$ and all $\beta\in\ZZ^N_+$ we have
$$ |\partial_y^\beta E_k(x,y)| \leq |x|^{|\beta|} \displaystyle\max_{g\in G} e^{\text{Re}\langle gx,y\rangle}.$$

It is then possible to define a Dunkl transform on $L^1(\mu_k)$ by
$$ \mathcal{D}_k(f)(\xi)= \frac{1}{M_k}\Intr_{\RR^N} f(x)E_k(-i\xi,x)\diff \mu_k(x), \qquad \text{ for all } \xi\in\RR^N,$$
where $M_k$ is the Macdonald-Mehta integral. The Dunkl transform extends to an isometric isomorphism of $L^2(\mu_k)$; in particular, the Plancherel formula holds. When $k=0$ the Dunkl transform reduces to the Fourier transform.

The Dunkl heat kernel is defined as 
$$ h_t(x,y)=\frac{1}{M_k(2t)^{\gamma+N/2}}e^{-(|x|^2+|y|^2)/4t}E_k\left(\frac{x}{\sqrt{2t}},\frac{y}{\sqrt{2t}}\right),$$
for $t>0$ and $x,y\in\RR^N$, and it satisfies the bounds
\begin{align} \label{heatkernelbounds}
0< h_t(x,y)
\leq \frac{1}{(2t)^{\gamma+N/2}M_k}\displaystyle\max_{g\in G} e^{-|gx-y|^2/4t},
\end{align}
for all $x,y\in\RR^N$. 

The heat semigroup is defined for uniformly continuous $f:\RR^N\to \RR$ and $t\geq 0$ by
$$ e^{t\Delta_k}f(x)= \begin{cases} 
	\Intr_{\RR^N} h_t(x,y) f(y) w_k(y) \diff y, &\text{if } t>0, \\
	f(x), &\text{if } t=0.
	\end{cases} $$

\section{The carr\'e-du-champ operator}

A key ingredient in our proofs below will be the carr\'e-du-champ operator. This is defined as
$$\Gamma (f) = \frac{1}{2} (\Delta_k(f^2) - 2 f \Delta_k f).$$

The following Lemma gives an expression of this operator.

\begin{lem} \label{Dunklcarreduchamp}
We have
$$ \Delta_k (f^2) 
 = 2f\Delta_k f + 2 |\nabla f|^2 + 2\Suma k_\alpha 
 	\left(
 		\frac{f(x)- f(\sigma_\alpha x)}{\alx}
 	\right)^2.$$
In particular, we obtain the following expression for the carr\'e-du-champ operator
\begin{equation*} 
\Gamma(f) = |\nabla f|^2 + \Suma k_\alpha 
 	\left(
 		\frac{f(x)- f(\sigma_\alpha x)}{\alx}
 	\right)^2.
\end{equation*}
\end{lem}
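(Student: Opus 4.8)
The plan is to compute $\Delta_k(f^2)$ directly from formula~\eqref{Dunkllaplacian}, applied with $f$ replaced by $f^2$, and then simplify the resulting terms using elementary algebraic identities. Writing $g = f^2$, we have $\nabla g = 2f\nabla f$ and $\Delta g = 2f\Delta f + 2|\nabla f|^2$ by the ordinary product rule and chain rule. Substituting into~\eqref{Dunkllaplacian} gives
$$ \Delta_k(f^2) = 2f\Delta f + 2|\nabla f|^2 + 2\Suma k_\alpha\left[ \frac{2f(x)\langle\nabla f(x),\alpha\rangle}{\alx} - \frac{f(x)^2 - f(\sigma_\alpha x)^2}{\alx^2}\right]. $$
The first task is to recognise the bracketed sum in this expression, together with the piece $2f\Delta f$, as "$2f\Delta_k f$ plus a correction." Indeed, applying~\eqref{Dunkllaplacian} to $f$ itself and multiplying by $2f$ yields
$$ 2f\Delta_k f = 2f\Delta f + 2\Suma k_\alpha\left[ \frac{2f(x)\langle\nabla f(x),\alpha\rangle}{\alx} - \frac{2f(x)\bigl(f(x) - f(\sigma_\alpha x)\bigr)}{\alx^2}\right]. $$

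Subtracting, all the $\Delta f$ terms and all the gradient terms cancel, and we are left with a single sum over $\alpha\in R_+$ whose summand is a multiple of
$$ \frac{2f(x)\bigl(f(x)-f(\sigma_\alpha x)\bigr) - \bigl(f(x)^2 - f(\sigma_\alpha x)^2\bigr)}{\alx^2}. $$
The key algebraic step is then to factor the numerator: since $f(x)^2 - f(\sigma_\alpha x)^2 = \bigl(f(x)-f(\sigma_\alpha x)\bigr)\bigl(f(x)+f(\sigma_\alpha x)\bigr)$, the numerator equals $\bigl(f(x)-f(\sigma_\alpha x)\bigr)\bigl[2f(x) - f(x) - f(\sigma_\alpha x)\bigr] = \bigl(f(x)-f(\sigma_\alpha x)\bigr)^2$. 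Hence $\Delta_k(f^2) - 2f\Delta_k f = 2\Suma k_\alpha\bigl(\tfrac{f(x)-f(\sigma_\alpha x)}{\alx}\bigr)^2$, which is exactly the claimed identity; the formula for $\Gamma(f)$ then follows by substituting into its definition $\Gamma(f) = \tfrac12\bigl(\Delta_k(f^2) - 2f\Delta_k f\bigr) = |\nabla f|^2 + \Suma k_\alpha\bigl(\tfrac{f(x)-f(\sigma_\alpha x)}{\alx}\bigr)^2$ after moving the $2|\nabla f|^2$ term across.

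There is no real obstacle here: the proof is a bookkeeping exercise in which the two applications of~\eqref{Dunkllaplacian} must be lined up carefully so the gradient and Laplacian terms cancel. The only point requiring mild care is the very last factoring identity $2ab - (a^2-b^2) = (a-b)^2$ with $a = f(x)$, $b = f(\sigma_\alpha x)$ — it is trivial but it is what makes the difference operator contribution come out as a perfect square, which is the whole content of the lemma (and what guarantees $\Gamma(f)\geq 0$, since $k_\alpha\geq 0$).
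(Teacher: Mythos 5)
Your proposal is correct and follows essentially the same route as the paper: apply \eqref{Dunkllaplacian} to $f^2$, recognise $2f\Delta_k f$ inside the expansion, and reduce the leftover to a perfect square via $2ab-(a^2-b^2)=(a-b)^2$. The only blemish is the displayed line ``$\Delta_k(f^2)-2f\Delta_k f = 2\Suma k_\alpha(\cdots)^2$'', which omits the $2|\nabla f|^2$ term that survives the subtraction; you clearly track it (it reappears when you assemble $\Gamma(f)$), but that intermediate identity should read $\Delta_k(f^2)-2f\Delta_k f = 2|\nabla f|^2 + 2\Suma k_\alpha\left(\frac{f(x)-f(\sigma_\alpha x)}{\alx}\right)^2$.
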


\begin{proof}
Using (\ref{Dunkllaplacian}), we have
\begin{align*}
\Delta_k (f^2) 
& = \Delta (f^2) + 2 \Suma k_\alpha 
	\left( 
		\frac{\langle \nabla(f^2), \alpha \rangle}{\alx} - \frac{f^2(x)-f^2(\sigma_\alpha x)}{\alx^2}	
	\right)
\\
& = 2f \Delta f + 2 |\nabla f|^2 + 4f(x) \Suma k_\alpha 
	\left(
		\frac{\langle\nabla f, \alpha \rangle}{\alx} 
		- \frac{f(x)-f(\sigma_\alpha x)}{\alx^2}
	\right)
\\
&\qquad
 + 2\Suma k_\alpha 
 	\left(
 		\frac{f(x)- f(\sigma_\alpha x)}{\alx}
 	\right)^2
\\
& = 2f\Delta_k f + 2 |\nabla f|^2 + 2\Suma k_\alpha 
 	\left(
 		\frac{f(x)- f(\sigma_\alpha x)}{\alx}
 	\right)^2.
\end{align*}
The expression for $\Gamma(f)$ then follows immediately from this and the definition.
\end{proof}

In the Euclidean case, as well as on Riemannian manifolds with Laplace-Beltrami operator, we have $\Gamma(f)=|\nabla f|^2$. The same is not true in the Dunkl case; however, using integration by parts, we can compute the Dirichlet form to obtain
$$ \IntN \Gamma(f) \diff\mu_k = \IntN |\nabla_k f|^2 \diff\mu_k.$$
This, together with Lemma \ref{Dunklcarreduchamp}, give the following useful relation.

\begin{lem} \label{dunkl-euclidean-dirichletform}
For all $f\in C^1_0(\RR^N)$ we have
$$ \IntN |\nabla_k f|^2 \diff\mu_k \geq \IntN |\nabla f|^2 \diff\mu_k.$$
\end{lem}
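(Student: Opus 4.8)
The plan is to derive the inequality directly from the two facts already in hand: the pointwise formula for the carré-du-champ operator from Lemma~\ref{Dunklcarreduchamp}, and the identity $\IntN \Gamma(f)\diff\mu_k = \IntN|\nabla_k f|^2\diff\mu_k$ obtained by integration by parts. Concretely, integrating the expression
\[
\Gamma(f) = |\nabla f|^2 + \Suma k_\alpha \left(\frac{f(x)-f(\sigma_\alpha x)}{\alx}\right)^2
\]
against $\diff\mu_k$ gives
\[
\IntN |\nabla_k f|^2 \diff\mu_k = \IntN |\nabla f|^2 \diff\mu_k + \IntN \Suma k_\alpha \left(\frac{f(x)-f(\sigma_\alpha x)}{\alx}\right)^2 \diff\mu_k,
\]
and since every $k_\alpha \geq 0$ and each summand is a square, the last integral is nonnegative. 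Dropping it yields exactly $\IntN|\nabla_k f|^2\diff\mu_k \geq \IntN|\nabla f|^2\diff\mu_k$.

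The steps, in order, are: first, apply Lemma~\ref{Dunklcarreduchamp} to write $\Gamma(f)$ pointwise; second, invoke the Dirichlet-form identity $\IntN \Gamma(f)\diff\mu_k = \IntN|\nabla_k f|^2\diff\mu_k$ stated just before the lemma; third, observe that the ``extra'' term is a sum of nonnegative contributions because $k_\alpha\geq 0$; fourth, discard it. The only technical point worth a sentence is justifying that all the integrals are finite and the integration by parts is legitimate — but this is covered by the hypothesis $f\in C^1_0(\RR^N)$ (compact support, $C^1$ regularity), which makes $f$, $\nabla f$, and the difference quotients $\frac{f(x)-f(\sigma_\alpha x)}{\alx}$ all bounded with compact support, hence in every $L^p(\mu_k)$, and which is precisely the setting in which the integration-by-parts formula for $T_i$ was stated.

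Honestly, there is no real obstacle here: the lemma is an immediate corollary of the two displayed facts preceding it, and the ``hard part'' — proving the pointwise carré-du-champ formula and establishing the Dirichlet-form identity — has already been done. If I wanted to be maximally careful I would spell out the computation $\IntN \Gamma(f)\diff\mu_k = \IntN|\nabla_k f|^2\diff\mu_k$ in its own right (expand $\Gamma(f) = \tfrac12\Delta_k(f^2) - f\Delta_k f$, integrate by parts twice using $\IntN T_i(u)v\diff\mu_k = -\IntN u\,T_i(v)\diff\mu_k$ to get $\IntN \Delta_k(f^2)\diff\mu_k = 0$ and $-\IntN f\Delta_k f\diff\mu_k = \IntN |\nabla_k f|^2\diff\mu_k$, using $\nabla_k(f^2) = 2f\nabla_k f$ for the latter), but since the excerpt already asserts this identity I would simply cite it. So the proof is three lines.
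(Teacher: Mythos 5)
Your proof is correct and is exactly the argument the paper intends: the lemma is stated as an immediate consequence of the pointwise formula for $\Gamma(f)$ from Lemma \ref{Dunklcarreduchamp} together with the Dirichlet-form identity $\IntN \Gamma(f)\diff\mu_k = \IntN|\nabla_k f|^2\diff\mu_k$, with the nonnegative reflection term discarded. Your extra remarks on integrability and on how to verify the Dirichlet-form identity by integration by parts are sound but not needed beyond what the paper already asserts.
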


A similar very useful result that was already used in the proof of Sobolev inequality is the following.

\begin{lem} \label{modfunction}
For all $f\in C_0^1(\RR^N)$ we have
$$ \IntN |\nabla_k |f||^2 \diff\mu_k \leq \IntN |\nabla_k f|^2 \diff\mu_k.$$
\end{lem}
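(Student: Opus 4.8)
The plan is to use the explicit formula for $\Gamma$ from Lemma~\ref{Dunklcarreduchamp} applied to $|f|$, compare it termwise with $\Gamma(f)$, and then integrate. Since $\int_{\RR^N}|\nabla_k g|^2\diff\mu_k = \int_{\RR^N}\Gamma(g)\diff\mu_k$ for every $g\in C_0^1(\RR^N)$ (the Dirichlet form identity stated just above), it suffices to prove the pointwise (or at least integrated) inequality $\Gamma(|f|)\le \Gamma(f)$, i.e.
$$ |\nabla |f||^2 + \Suma k_\alpha\left(\frac{|f(x)|-|f(\sigma_\alpha x)|}{\alx}\right)^2 \le |\nabla f|^2 + \Suma k_\alpha\left(\frac{f(x)-f(\sigma_\alpha x)}{\alx}\right)^2. $$
The first terms satisfy $|\nabla|f||^2 = |\nabla f|^2$ a.e.\ (wherever $f\neq 0$, by the chain rule; on the interior of $\{f=0\}$ both sides vanish; the boundary has measure zero), so the claim reduces to the elementary numerical fact $\bigl||a|-|b|\bigr|\le |a-b|$ applied with $a=f(x)$, $b=f(\sigma_\alpha x)$ in each summand, with the $k_\alpha\ge 0$ and $\alx^2>0$ weights preserving the inequality.

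The one technical subtlety is that $|f|$ is only Lipschitz, not $C^1$, so Lemma~\ref{Dunklcarreduchamp} and the Dirichlet form identity do not apply to it verbatim. I would handle this by regularization: replace $|f|$ by $f_\varepsilon := \sqrt{f^2+\varepsilon^2}-\varepsilon$, which is smooth, compactly supported (since $f$ is), and satisfies $\nabla f_\varepsilon = \frac{f\nabla f}{\sqrt{f^2+\varepsilon^2}}$, so $|\nabla f_\varepsilon|^2 = \frac{f^2}{f^2+\varepsilon^2}|\nabla f|^2 \le |\nabla f|^2$. Likewise, since $t\mapsto\sqrt{t^2+\varepsilon^2}-\varepsilon$ is $1$-Lipschitz, $|f_\varepsilon(x)-f_\varepsilon(\sigma_\alpha x)|\le|f(x)-f(\sigma_\alpha x)|$. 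Hence $\Gamma(f_\varepsilon)\le\Gamma(f)$ pointwise, so $\int|\nabla_k f_\varepsilon|^2\diff\mu_k = \int\Gamma(f_\varepsilon)\diff\mu_k \le \int\Gamma(f)\diff\mu_k = \int|\nabla_k f|^2\diff\mu_k$. Letting $\varepsilon\to 0$, we have $f_\varepsilon\to|f|$ and $\nabla f_\varepsilon\to\nabla|f|$ boundedly and a.e., and the difference quotients converge as well, so by dominated convergence (everything is supported in a fixed compact set and dominated by constants times $\Gamma(f)$, which is integrable) the left-hand side passes to the limit, giving $\int|\nabla_k|f||^2\diff\mu_k\le\int|\nabla_k f|^2\diff\mu_k$.

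The main obstacle is thus purely the non-smoothness of $|f|$ — making sense of $\nabla_k|f|$ and of the Dirichlet form identity for a merely Lipschitz function — and the $f_\varepsilon$ regularization is the standard and cleanest way around it. Everything else is the pointwise comparison $\Gamma(|f|)\le\Gamma(f)$, which is immediate from the $\Gamma$-formula together with $\bigl||a|-|b|\bigr|\le|a-b|$ and the chain rule for $\nabla|f|$. I would also remark that the same argument shows more generally $\Gamma(\phi\circ f)\le(\mathrm{Lip}\,\phi)^2\,\Gamma(f)$ for $1$-Lipschitz $\phi$ with $\phi(0)=0$, of which this lemma is the case $\phi(t)=|t|$.
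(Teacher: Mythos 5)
Your proposal is correct and follows essentially the same route as the paper: the pointwise comparison $\Gamma(|f|)\le\Gamma(f)$ via $\bigl||a|-|b|\bigr|\le|a-b|$ and $|\nabla|f||=|\nabla f|$, combined with the Dirichlet form identity $\int_{\RR^N}\Gamma(g)\diff\mu_k=\int_{\RR^N}|\nabla_k g|^2\diff\mu_k$. The only difference is that you add the $\sqrt{f^2+\varepsilon^2}-\varepsilon$ regularization to justify applying the smooth-function identities to the merely Lipschitz $|f|$ — a point the paper passes over silently — which is a welcome extra precision but not a different argument.
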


\begin{proof}
We note that $(|f(x)|-|f(\sigma_\alpha x)|)^2 \leq (f(x)-f(\sigma_\alpha x))^2$ holds for all $x\in\RR^N$ and all $\alpha\in R$. Since also $|\nabla |f|| = |\nabla f|$, then from Lemma \ref{Dunklcarreduchamp} we have
$$ \Gamma(|f|) \leq \Gamma(f).$$
Using Dirichlet forms as above, the conclusion follows.
\end{proof}

We also have the following pointwise estimate of the carr\'e-du-champ operator in terms of the Dunkl gradient.

\begin{prop} \label{Gammaboundgradient}
There exists a constant $C>0$ such that the following inequality holds for all functions $f$ for which all terms below are well-defined
$$ \Gamma(f) \geq C |\nabla_k f|^2.$$
\end{prop}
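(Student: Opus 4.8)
The plan is to reduce the statement to a pointwise application of the triangle inequality and Cauchy--Schwarz, by writing both sides in terms of the same elementary quantities. For each $\alpha \in R_+$ set $b_\alpha(x) = \frac{f(x) - f(\sigma_\alpha x)}{\langle \alpha, x\rangle}$. From the definition of the Dunkl operators, $\nabla_k f(x) = \nabla f(x) + \sum_{\alpha\in R_+} k_\alpha b_\alpha(x)\,\alpha$ as an identity between vectors in $\RR^N$, while Lemma~\ref{Dunklcarreduchamp} gives $\Gamma(f)(x) = |\nabla f(x)|^2 + \sum_{\alpha\in R_+} k_\alpha b_\alpha(x)^2$. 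So it suffices to bound $|\nabla_k f|^2$ from above by a multiple of $\Gamma(f)$.

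Writing $u = \nabla f(x)$ and $v = \sum_{\alpha\in R_+} k_\alpha b_\alpha(x)\,\alpha$, the elementary inequality $|u+v|^2 \le 2|u|^2 + 2|v|^2$ leaves us to control $|v|^2$. Since $|\alpha|^2 = 2$ for every root, the triangle inequality gives $|v| \le \sqrt 2 \sum_{\alpha\in R_+} k_\alpha |b_\alpha|$, and then Cauchy--Schwarz (splitting $k_\alpha = \sqrt{k_\alpha}\cdot\sqrt{k_\alpha}$) yields
$$ |v|^2 \le 2\Big(\sum_{\alpha\in R_+} k_\alpha\Big)\Big(\sum_{\alpha\in R_+} k_\alpha b_\alpha^2\Big) = 2\gamma \sum_{\alpha\in R_+} k_\alpha b_\alpha^2. $$
Combining the two estimates, $|\nabla_k f|^2 \le 2|\nabla f|^2 + 4\gamma \sum_{\alpha\in R_+} k_\alpha b_\alpha^2 \le \max(2,4\gamma)\,\Gamma(f)$, so the proposition holds with $C = 1/\max(2,4\gamma)$ (and with $C=1$ in the degenerate case $\gamma = 0$, where $\nabla_k = \nabla$ and $\Gamma(f) = |\nabla f|^2$).

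There is no real obstacle here: the claim is a purely algebraic pointwise inequality, and the constant is manifestly uniform in $f$ and $x$ because $\gamma$ and the root norms are fixed by the root system. The one mild point to note is the handling of the cross term $2\langle \nabla f, v\rangle$ hidden in $|\nabla_k f|^2$; I absorb it crudely via $|u+v|^2\le 2|u|^2+2|v|^2$, but one could instead use Young's inequality $2\langle u,v\rangle \le \varepsilon|u|^2 + \varepsilon^{-1}|v|^2$ and optimize over $\varepsilon>0$ for a slightly better constant. Since the proposition only asserts the existence of $C$, the crude bound is enough.
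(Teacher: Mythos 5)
Your proof is correct. It rests on the same two pillars as the paper's: the expression for $\Gamma(f)$ from Lemma \ref{Dunklcarreduchamp} and the decomposition of $\nabla_k f$ into its differential part $\nabla f$ and its difference part; the only real divergence is in how the difference part is absorbed. The paper works componentwise, writing $T_i = \partial_i + D_i$, lower-bounding $\frac{1}{2}\sum_\alpha \alpha_i^2 k_\alpha b_\alpha^2$ by a multiple of $(D_i f)^2$ via Cauchy--Schwarz over $R_+$, and then invoking the quadratic inequality $x^2 + cy^2 \geq \frac{c}{c+1}(x+y)^2$ in each coordinate; this produces a constant built from $\min_{\alpha}\frac{1}{2k_\alpha}$ and $|R_+|$. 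You instead stay at the vector level, use $|u+v|^2 \leq 2|u|^2 + 2|v|^2$ to dispose of the cross term, and apply a weighted Cauchy--Schwarz with the splitting $k_\alpha = \sqrt{k_\alpha}\cdot\sqrt{k_\alpha}$, which yields the cleaner constant $C = 1/\max(2,4\gamma)$ depending only on $\gamma = \sum_\alpha k_\alpha$. Your version handles vanishing coefficients $k_\alpha = 0$ without the paper's $1/k_\alpha = \infty$ convention, and your constant does not degenerate when some $k_\alpha$ is small while another is large; the paper's componentwise route, optimized via its equality condition $x = cy$, can in principle give a sharper constant in specific regimes, but since the proposition only asserts existence of some $C>0$, either argument suffices.
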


\begin{proof}
For $i=1,\ldots,N$, let 
$$ D_i f(x)= \Suma \alpha_i k_\alpha \frac{f(x)-f(\sigma_\alpha x)}{\alx},$$
so 
$$ T_if(x) = \partial_i f(x) + D_if(x).$$

From Lemma \ref{Dunklcarreduchamp}, we have (recall that $|\alpha|^2=2$)
$$ \Gamma(f)(x)
=\sum_{i=1}^N \left(|\partial_i f(x)|^2 + \frac{1}{2}\Suma \alpha_i^2 k_\alpha \left(\frac{f(x)-f(\sigma_\alpha x)}{\alx} \right)^2\right).$$
We will estimate the $\alpha$-sum first. Take $\tilde{C}:=\min_{\alpha\in R_+} \frac{1}{2k_\alpha}$. Here we use the convention that if $k_\alpha=0$, then $\frac{1}{k_\alpha}=\infty$. Since $k$ does not vanish identically and $R_+$ is a finite set, then $\tilde{C}$ is well-defined and finite. We then have
\begin{align*}
\frac{1}{2}\Suma \alpha_i^2 k_\alpha \left(\frac{f(x)-f(\sigma_\alpha x)}{\alx} \right)^2
&\geq \tilde{C} \Suma \alpha_i^2 k_\alpha^2 \left(\frac{f(x)-f(\sigma_\alpha x)}{\alx} \right)^2
\\
&\geq \frac{\tilde{C}}{\sqrt{|R_+|}} (D_if(x))^2.
\end{align*}

Going back to the carr\'e-du-champ operator, we now have
\begin{equation} \label{Gammaineq}
\Gamma(f)(x) 
\geq \sum_{i=1}^N \left(|\partial_i f(x)|^2 + \frac{\tilde{C}}{\sqrt{|R_+|}} (D_if(x))^2\right).
\end{equation}

The following inequality holds for all $x,y\in \RR$ and all $c>0$
\begin{equation} \label{elementaryineq}
x^2 + cy^2 \geq \frac{c}{c+1} (x+y)^2.
\end{equation}
Indeed, by rearranging the terms, this is equivalent to
$$ (x-cy)^2 \geq 0.$$

Using this inequality separately for each $i=1,\ldots, N$, from inequality (\ref{Gammaineq}) above we obtain that
$$ \Gamma(f)(x) 
\geq C \sum_{i=1}^N \left(\partial_i f(x) + D_if(x)\right)^2 = C |\nabla_k f(x)|^2,$$
where $C=\frac{\tilde{C}/\sqrt{|R_+|}}{1+\tilde{C}/\sqrt{|R_+|}}$ is the constant obtained from inequality (\ref{elementaryineq}).
\end{proof}
\section{Nash's inequality}

In this section we will prove the Nash inequality using an elementary method that exploits the Dunkl transform and its rich theory. Using a nice method of \cite{BCLS} we can prove that this in turn implies the Sobolev inequality.


%
%


\begin{prop}[Nash inequality]

There exists a constant $C>0$ that depends on $N$ and $k$ such that the following inequality holds for all $f\in L^1(\mu_k)$ such that $\nabla_k f \in L^2(\mu_k)$:
$$ \norm{f}_{L^2(\mu_k)}^{1+2/(2\gamma+N)}  
\leq C \norm{\nabla_k(f)}_{L^2(\mu_k)} \norm{f}_{L^1(\mu_k)}^{2/(2\gamma+N)}.$$
\end{prop}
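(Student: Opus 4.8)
The plan is to mimic the classical Fourier-analytic proof of Nash's inequality, replacing the Fourier transform by the Dunkl transform $\mathcal{D}_k$ and using the Plancherel theorem for $\mathcal{D}_k$ on $L^2(\mu_k)$. First I would write, by Plancherel,
\[
\norm{f}_{L^2(\mu_k)}^2 = \norm{\mathcal{D}_k f}_{L^2(\mu_k)}^2 = \Intr_{|\xi|\le R} |\mathcal{D}_k f(\xi)|^2 \diff\mu_k(\xi) + \Intr_{|\xi| > R} |\mathcal{D}_k f(\xi)|^2 \diff\mu_k(\xi),
\]
for a radius $R>0$ to be optimised at the end. For the low-frequency part, since $f\in L^1(\mu_k)$, the definition of $\mathcal{D}_k$ together with the bound $|E_k(-i\xi,x)|\le 1$ (which follows from the growth estimate on $E_k$ stated in the Preliminaries, since $\operatorname{Re}\langle gx,-i\xi\rangle = 0$) gives $\norm{\mathcal{D}_k f}_\infty \le \frac{1}{M_k}\norm{f}_{L^1(\mu_k)}$; hence the low-frequency integral is at most $\frac{1}{M_k^2}\norm{f}_{L^1(\mu_k)}^2 \cdot \mu_k(\{|\xi|\le R\})$. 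Because $w_k$ is homogeneous of degree $2\gamma$, one has $\mu_k(\{|\xi|\le R\}) = c_k R^{N+2\gamma}$ for an explicit constant $c_k$, so this term is $\lesssim R^{N+2\gamma}\norm{f}_{L^1(\mu_k)}^2$.

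For the high-frequency part, I would use the fact that the Dunkl transform intertwines $\nabla_k$ with multiplication by $\xi$: more precisely $\mathcal{D}_k(T_j f)(\xi) = i\xi_j \mathcal{D}_k f(\xi)$, so that $\sum_j \norm{\mathcal{D}_k(T_jf)}_{L^2(\mu_k)}^2 = \Intr |\xi|^2 |\mathcal{D}_k f(\xi)|^2 \diff\mu_k(\xi)$, and by Plancherel again this equals $\norm{\nabla_k f}_{L^2(\mu_k)}^2$. Therefore
\[
\Intr_{|\xi|>R} |\mathcal{D}_k f(\xi)|^2 \diff\mu_k(\xi) \le \frac{1}{R^2}\Intr_{|\xi|>R} |\xi|^2 |\mathcal{D}_k f(\xi)|^2 \diff\mu_k(\xi) \le \frac{1}{R^2}\norm{\nabla_k f}_{L^2(\mu_k)}^2.
\]
Combining the two pieces, $\norm{f}_{L^2(\mu_k)}^2 \le C_1 R^{N+2\gamma}\norm{f}_{L^1(\mu_k)}^2 + R^{-2}\norm{\nabla_k f}_{L^2(\mu_k)}^2$, and then I would optimise over $R>0$ (choosing $R$ so the two terms balance, i.e. $R^{N+2\gamma+2} \sim \norm{\nabla_k f}_{L^2(\mu_k)}^2 / (C_1\norm{f}_{L^1(\mu_k)}^2)$). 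Substituting this value of $R$ and simplifying the exponents yields exactly
\[
\norm{f}_{L^2(\mu_k)}^{2 + 4/(N+2\gamma)} \le C\,\norm{\nabla_k f}_{L^2(\mu_k)}^2\,\norm{f}_{L^1(\mu_k)}^{4/(N+2\gamma)},
\]
which is the claimed inequality after taking square roots.

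The main obstacle I anticipate is not the optimisation (routine) but justifying the two Dunkl-transform identities I need in the correct generality: that $|E_k(-i\xi,x)|\le 1$ for real $\xi,x$ (needed for the $L^1\to L^\infty$ bound), and that $\mathcal{D}_k$ carries $T_j$ to multiplication by $i\xi_j$ together with the Plancherel identity $\norm{\nabla_k f}_{L^2(\mu_k)}^2 = \Intr |\xi|^2|\mathcal{D}_k f|^2 \diff\mu_k$ — and in particular making sure these hold for the stated class of $f$ (namely $f\in L^1(\mu_k)$ with $\nabla_k f\in L^2(\mu_k)$), which may require a density/approximation argument reducing to $f\in C_c^\infty(\RR^N)$ before passing to the limit. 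A minor additional point is recording the exact value, or at least finiteness, of $\mu_k(\{|\xi|\le 1\})$, which follows from homogeneity of $w_k$ and local integrability of $w_k$.
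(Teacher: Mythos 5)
Your proposal is correct and follows essentially the same route as the paper: a Plancherel decomposition of $\|\mathcal{D}_k f\|_{L^2(\mu_k)}^2$ into $|\xi|\le R$ and $|\xi|>R$, with the low frequencies controlled by $|E_k(-i\xi,x)|\le 1$ and the homogeneity computation $\mu_k(B_R)=cR^{N+2\gamma}$, the high frequencies by $\mathcal{D}_k(T_jf)=i\xi_j\mathcal{D}_kf$ together with Parseval, and a final optimisation over $R$. The paper's proof is exactly this argument, including the same constant $\mu_k(B_R)=\frac{p(B_1)}{N+2\gamma}R^{N+2\gamma}$ and the same balancing choice of $R$.
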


\begin{proof}
Fix an $R>0$ and let $B_R$ be the ball in $\RR^N$ of radius $R$ and centred at the origin. We have
\begin{align*}
\Intr_{\RR^N\setminus B_R} |\DT(f)(\xi)|^2 \diff\mu_k 
&\leq \Intr_{\RR^N\setminus B_R}\frac{|\xi|^2}{R^2} |\DT(f)(\xi)|^2 \diff\mu_k \\
&\leq \frac{1}{R^2} \Intr_{\RR^N} |\xi|^2 |\DT(f)(\xi)|^2 \diff\mu_k.
\end{align*}

Using the property that 
$$ \DT(T_jf)(\xi)= i\xi_j \DT(f)(\xi)$$
for any $j=1,\ldots, N$, then the right hand side of the above double inequality becomes
$$ \frac{1}{R^2} \Intr_{\RR^N} |\DT(\nabla_kf)|^2 \diff\mu_k
=\frac{1}{R^2} \Intr_{\RR^N} |\nabla_k(f)|^2 \diff\mu_k,$$
where we used Parseval's theorem. 

On the other hand, looking at the integral on $B_R$, we have
\begin{align*}
\Intr_{B_R} |\DT(f)(\xi)|^2 \diff\mu_k 
&= \frac{1}{M_k^2}\Intr_{B_R} \left| \Intr_{\RR^N} f(x)E_k(-i\xi,x)\diff\mu_k(x)\right|^2 \diff\mu_k(\xi)  
\\
&\leq \frac{1}{M_k^2}\Intr_{B_R} \left( \Intr_{\RR^N} |f(x)E_k(-i\xi,x)| \diff\mu_k(x) \right)^2 \diff\mu_k(\xi) 
\\
&\leq \frac{1}{M_k^2}\Intr_{B_R}\left(\Intr |f(x)| \diff\mu_k(x)\right)^2 \diff\mu_k(\xi) 
\\
&=\frac{1}{M_k^2}\mu_k(B_R) \norm{f}_{L^1(\mu_k)}^2.
\end{align*}
Here we used the bounds $|E_k(-i\xi,x)|\leq 1$ of the Dunkl exponential. 

Using the homogeneity of the Dunkl weight, we can compute using spherical coordinates
$$ \mu_k(B_R)=\Intr_{B_R} \diff\mu_k(x)=\int_{\mathbb{S}_{N-1}}\Intr_0^R r^{N+2\gamma-1} w_k(\theta) \diff r\diff \sigma(\theta) =\frac{p(B_1)}{N+2\gamma} R^{N+2\gamma},$$
where $p(B_1)=\Intr_{\mathbb{S}_{N-1}} w_k(\theta) \diff\theta$ (we come back to this constant in section 6).

Putting the above together and using Parseval's theorem, we have obtained
\begin{align*}
\Intr_{\RR^N} |f(x)|^2\diff\mu_k(x) 
&=\Intr_{\RR^N} |\DT(f)(\xi)|^2 \diff\mu_k(\xi) \\
&\leq \frac{1}{R^2} \Intr_{\RR^N} |\nabla_k(f)|^2 \diff\mu_k + \frac{p(B_1)}{M_k^2(N+2\gamma)} R^{N+2\gamma} \norm{f}_{L^1(\mu_k)}^2
\end{align*}
The right hand side is optimised for 
$$ R=\left( \frac{2M_k^2 \norm{\nabla_k(f)}_{L^2(\mu_k)}^2}{p(B_1) \norm{f}_{L^1(\mu_k)}^2}\right)^{1/(N+2\gamma+2)}$$
and upon substituting this above and raising everything to the power $\frac{2\gamma+N+2}{2(2\gamma+N)}$, we obtain finally
$$ \norm{f}_{L^2(\mu_k)}^{1+2/(2\gamma+N)}  
\leq C \norm{\nabla_k(f)}_{L^2(\mu_k)} \norm{f}_{L^1(\mu_k)}^{2/(2\gamma+N)},$$
for a constant $C$ which can be computed explicitly from the above.
\end{proof}

We can now deduce a Sobolev inequality using the elementary argument from \cite{BCLS}.

\begin{thm}[Sobolev inequality] \label{sobolev}
Suppose $N+2\gamma>2$. Then there exists a constant $C>0$ such that for all $f\in C_c^\infty(\RR^N)$ we have the inequality
$$ \norm{f}_{L^q(\mu_k)} \leq C \norm{\nabla_k f}_{L^2(\mu_k)},$$
where $q=\frac{2(N+2\gamma)}{N+2\gamma-2}$.
\end{thm}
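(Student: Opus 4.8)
The plan is to deduce the Sobolev inequality from the Nash inequality using the elementary interpolation argument of \cite{BCLS}, which shows that in fact Nash and Sobolev are equivalent (one direction follows trivially from Hölder, the other is the content here). The key is that the Nash inequality already encodes the correct scaling: the homogeneity degree $N+2\gamma$ of the measure $\mu_k$ plays exactly the role that the dimension $N$ plays in the classical Euclidean case, and since the proof of the classical implication ``Nash $\Rightarrow$ Sobolev'' uses only the abstract inequality together with a dyadic decomposition, it transfers verbatim to our setting once $N$ is replaced by $N+2\gamma$.

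Concretely, here are the steps I would carry out. First, set $d:=N+2\gamma$ and $q=\frac{2d}{d-2}$, and fix $f\in C_c^\infty(\RR^N)$; by Lemma \ref{modfunction} we may assume $f\geq 0$, since replacing $f$ by $|f|$ does not change $\norm{f}_q$ and does not increase $\norm{\nabla_k f}_2$. Second, for $j\in\ZZ$ let $f_j:=\min\{\max\{f-2^j,0\},2^j\}$ be the truncation of $f$ to the dyadic level $[2^j,2^{j+1}]$; these are Lipschitz, compactly supported, and satisfy $\nabla_k f_j = (\nabla_k f)\1_{\{2^j<f<2^{j+1}\}}$ in the appropriate sense, so that $\sum_j \norm{\nabla_k f_j}_2^2 \leq \norm{\nabla_k f}_2^2$ (the level sets are disjoint). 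Third, apply the Nash inequality to each $f_j$: writing $a_j:=\norm{f_j}_2^2$, $b_j:=\mu_k(\{f>2^j\})$ and using $\norm{f_j}_1 \leq 2^j b_j$ and $a_j \geq 2^{2j} b_{j+1}$, the Nash inequality $a_j^{1+2/d}\leq C\norm{\nabla_k f_j}_2 \norm{f_j}_1^{2/d}$ becomes, after routine manipulation, an estimate of the form $\norm{\nabla_k f_j}_2^2 \gtrsim a_j^{(d-2)/d}\,(\text{something})$, and one sums over $j$. Fourth, the left side sums to at most $\norm{\nabla_k f}_2^2$ while the right side is bounded below, via the superadditivity of $t\mapsto t^{(d-2)/d}$ for the relevant partial sums (this is where the precise bookkeeping of \cite{BCLS} enters), by a constant multiple of $\left(\sum_j 2^{jq}b_j\right)^{2/q} \gtrsim \norm{f}_q^2$, using that $\norm{f}_q^q \asymp \sum_j 2^{jq}\mu_k(\{f>2^j\})$ by the layer-cake formula. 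Combining gives $\norm{f}_q \leq C\norm{\nabla_k f}_2$.

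The routine analytic facts needed --- the chain/truncation rule for $\nabla_k$ on composing $f$ with a piecewise-linear Lipschitz function, the disjointness of Dunkl gradients of the $f_j$, and the layer-cake comparison $\norm{f}_q^q\asymp\sum_j 2^{jq}\mu_k(\{f>2^j\})$ --- all hold for $\mu_k$ exactly as for Lebesgue measure, since $\mu_k$ is a Borel measure and nothing beyond its $\sigma$-additivity and the Dunkl product/quotient rules is used; in particular the truncation rule for $\nabla_k$ follows from the one for $\nabla$ together with the elementary fact that $|\phi(f(x))-\phi(f(\sigma_\alpha x))|\leq |f(x)-f(\sigma_\alpha x)|$ for $1$-Lipschitz $\phi$, applied to the nonlocal terms of $T_i$, which is the same observation underlying Lemma \ref{modfunction}.

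The main obstacle I anticipate is the second point of the previous paragraph: making rigorous the claim $\sum_j\norm{\nabla_k f_j}_2^2\leq\norm{\nabla_k f}_2^2$. Unlike the classical gradient, $\nabla_k$ contains the nonlocal difference terms $\frac{f(x)-f(\sigma_\alpha x)}{\alx}$, and truncating $f$ to a dyadic band does \emph{not} simply restrict these difference quotients to a level set --- when $x$ and $\sigma_\alpha x$ lie in different bands the truncated difference is genuinely smaller but not supported on a single $j$. The correct statement is instead the pointwise bound $\sum_j \Gamma(f_j)(x) \leq \Gamma(f)(x)$, which does hold: the local part gives $\sum_j|\nabla f_j|^2=|\nabla f|^2$ a.e., and for each fixed $\alpha$ the one-variable inequality $\sum_j (\phi_j(s)-\phi_j(t))^2 \leq (s-t)^2$ for the dyadic truncations $\phi_j$ of the identity (which follows because the $\phi_j$ are $1$-Lipschitz with derivatives summing to the indicator of a.e.\ point, so their increments telescope in absolute value and one applies $\sum c_j^2\leq(\sum|c_j|)^2$ when the $c_j$ have a common sign) handles the nonlocal part. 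Integrating against $\diff\mu_k$ and using the Dirichlet-form identity $\IntN\Gamma(g)\diff\mu_k=\IntN|\nabla_k g|^2\diff\mu_k$ from Section 3 then yields the summability, and the rest of the argument is the bookkeeping of \cite{BCLS}.
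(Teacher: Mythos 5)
Your proposal follows the same route as the paper -- Nash implies Sobolev via the dyadic truncations $f_j$ and the \cite{BCLS} bookkeeping -- and it is correct, but you handle the one genuinely delicate step differently, and in a way worth recording. The paper applies the Nash inequality to $f_j$ and immediately writes $\norm{\nabla_k f_j}_2^2 \leq \int_{S_j\setminus S_{j+1}}|\nabla_k f|^2\diff\mu_k$ (inequality (\ref{nashsob})), as one would for the classical gradient; as you point out, this per-level localization is not obvious for $\nabla_k$, since $\Gamma(f_j)$ contains difference terms $\bigl(f_j(x)-f_j(\sigma_\alpha x)\bigr)^2/\alx^2$ that do not vanish when $x$ lies outside the band $S_j\setminus S_{j+1}$ but $\sigma_\alpha x$ lies in a lower one. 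Your replacement -- the pointwise bound $\sum_j\Gamma(f_j)\leq\Gamma(f)$, proved from $\sum_j|\nabla f_j|^2\leq|\nabla f|^2$ a.e.\ together with the one-variable inequality $\sum_j(\phi_j(s)-\phi_j(t))^2\leq(s-t)^2$ for the dyadic truncations $\phi_j$ (whose increments all share the sign of $s-t$ and sum to $s-t$), followed by integration and the Dirichlet-form identity $\IntN\Gamma(g)\diff\mu_k=\IntN|\nabla_kg|^2\diff\mu_k$ -- yields the aggregate bound $\sum_j\norm{\nabla_k f_j}_2^2\leq\norm{\nabla_k f}_2^2$, and that is all the Hölder/summation step of \cite{BCLS} actually uses (the per-$j$ disjointness is never needed beyond the sum). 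So your version is a drop-in repair of an unjustified step in the paper's proof rather than a different proof. Two small points to tidy when writing it up: the Nash inequality applied to $f_j$ reads $a_j^{(d+2)/(2d)}\leq C\norm{\nabla_k f_j}_2\norm{f_j}_1^{2/d}$ with your normalization $a_j=\norm{f_j}_2^2$ (your displayed exponent is off by a square), and the final reduction from $f$ to $|f|$ should cite Lemma \ref{modfunction} exactly as you do.
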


As announced above, we will prove that the Sobolev inequality follows from the Nash inequality. Before giving the proof, we note that the opposite implication also holds. Indeed, this follows from H\"older's inequality $\norm{FG}_{L^1(\mu_k)} \leq \norm{F}_{L^P(\mu_k)}\norm{G}_{L^Q(\mu_k)}$ by applying it to $F=f^{2\theta}$, $G=f^{2(1-\theta)}$, with a suitable choice of $\theta\in(0,1)$ and $P,Q$ conjugate. We obtain
$$ \Intr_{\RR^N} f^2\diff\mu_k \leq \left(\Intr_{\RR^N} f^{2P\theta}\diff\mu_k\right)^{1/P}\left(\Intr_{\RR^N} f^{2(1-\theta)Q}\diff\mu_k\right)^{1/Q}.$$
Since we need $L^1$ and $L^{2(N+2\gamma)/(N+2\gamma-2)}$ norms on the right hand side, we let 
$$ 2\theta P=1 \;\;\text{ and }\;\; 2(1-\theta)Q=\frac{2(N+2\gamma)}{N+2\gamma-2}.$$
These equations, together with $\frac{1}{P}+\frac{1}{Q}=1$, have a unique solution. Making the substitution and using the Sobolev inequality, the Nash inequality follows.

\begin{proof}
Fix first a smooth function $f\geq 0$ of compact support. Let 
$$ f_j(x)=\begin{cases} 0, &\text{for } f(x)< 2^j \\
f(x)-2^j, &\text{for } 2^j\leq f(x) \leq 2^{j+1} \\
2^j, & \text{for } f(x)>2^{j+1}
\end{cases}$$
for any $j\in\ZZ$. Then $f_j\in L^1(\RR^N)$ and $\nabla_k f\in L^2(\mu_k)$, so we can apply the Nash inequality obtained above to get
\begin{align} \label{nashsob}
\left(\Intr_{\RR^N} f_j^2\diff\mu_k\right)^{1+2/(2\gamma+N)} 
\leq C^2 \Intr_{S_j\setminus S_{j+1}} |\nabla_k f|^2 \diff\mu_k \cdot \left( \Intr_{\RR^N} f_j \diff\mu_k\right)^{4/(2\gamma+N)},
\end{align}
where $S_j=f^{-1}([2^j,\infty))$. Using the definition of $f_j$, we also see that 
\begin{align} \label{bound1}
\Intr_{\RR^N} f_j^2 \diff\mu_k \geq 2^{2j} \mu_k(S_{j+1}),
\end{align}
and 
\begin{align} \label{bound2}
\Intr_{\RR^N} f_j \diff\mu_k \leq 2^j \mu_k(S_j).
\end{align}
Let $q=\frac{2(N+2\gamma)}{N+2\gamma-2}$, so we want to estimate $\norm{f}_{L^q(\mu_k)}$. We have
\begin{align*}
\Intr_{\RR^N} & f^q \diff\mu_k 
= \Sumn_{j\in\ZZ} \Intr_{S_j\setminus S_{j+1}} f^q\diff\mu_k 
\\
&\leq \Sumn_{j\in\ZZ} 2^{q(j+1)} (\mu_k(S_j)-\mu_k(S_{j+1}))
= (2^q-1)\Sumn_{j\in\ZZ} 2^{qj} \mu_k(S_j). 
\end{align*}
This suggests introducing the notation $a_j=2^{qj}\mu_k(S_j)$ and $b_j=\Intr_{S_j\setminus S_{j+1}} |\nabla_k f|^2 \diff\mu_k$. In light of the above, now we need to estimate $\Sumn_{j\in\ZZ} a_j$.

Using the bounds (\ref{bound1}) and (\ref{bound2}) into the inequality (\ref{nashsob}), we obtain after some manipulation
$$ a_{j+1} \leq 2^qC^{2p} b_j^p a_j^{4/(N+2\gamma+2)},$$
where $p=\frac{N+2\gamma}{N+2\gamma+2}$. Summing up over all $j\in\ZZ$ and noting that $\frac{2}{N+2\gamma+2}=1-p$, we can apply H\"older's inequality to obtain
\begin{align*}
\Sumn_{j\in\ZZ} a_{j}=\Sumn_{j\in\ZZ} a_{j+1} 
&\leq 2^qC^{2p} \Sumn_{j\in\ZZ} b_j^p a_j^{4/(N+2\gamma+2)} \\
&\leq 2^qC^{2p} \left( \Sumn_{j\in\ZZ} b_j\right)^p \left(\Sumn_{j\in\ZZ} a_j^2 \right)^{1-p}\\
&\leq 2^qC^{2p} \left( \Sumn_{j\in\ZZ} b_j\right)^p \left(\Sumn_{j\in\ZZ} a_j \right)^{2(1-p)}.
\end{align*}
Thus, we have
\begin{align*}
\norm{f}_q^q
\leq (2^q-1)\Sumn_{j\in\ZZ} a_{j} 
\leq 2^{q/(2p-1)} (2^q-1)C^{2p/(2p-1)} \left( \Sumn_{j\in\ZZ} b_j\right)^{p/(2p-1)}.
\end{align*}
But clearly $\Sumn_{j\in\ZZ} b_j\leq \Intr_{\RR^N} |\nabla_k f|^2\diff\mu_k$, and since $\frac{p}{2p-1}=\frac{q}{2}$, we obtain
$$ \norm{f}_{L^q(\mu_k)} \leq 2^{1/(2p-1)} (2^q-1)^{1/q}C \norm{\nabla_k f}_{L^2(\mu_k)},$$
as required.

If $f$ is not non-negative, then applying the above result to the non-negative function $|f|$ and using Lemma \ref{modfunction}, we have
$$ \norm{f}_q \leq \tilde{C} \norm{\nabla_k|f|}_2 \leq \tilde{C} \norm{\nabla_k f}_2.$$
This completes the proof.
\end{proof}

\section{Pseudo-Poincar\'e inequality}

In this section we will prove a sharper Sobolev-type inequality, involving  Besov spaces. Besov spaces $B_{pq}^s$ are generalisations of the classical Sobolev spaces; for example, we have $B_{2,2}^s=H^s$ for any $s>0$. They can be characterised by many equivalent definitions which are all rather technical; see \cite{Tri} for more information. Here, we will be concerned only with the spaces $B_{\infty,\infty}^s$, for $s<0$, which admit a simpler definition in terms of the heat semigroup. 

From now on we will work only in the Dunkl setting and in order to simplify notation, we will also use $B_{\infty,\infty}^s$ to denote the corresponding Dunkl-Besov space. Recall that $P_t = e^{t\Delta_k}$ is the heat semigroup. For any $s<0$, we define the Dunkl-Besov space $B_{\infty,\infty}^s$ as the space of all tempered distributions  $f\in \mathcal{S}'(\RR^N)$ for which the norm
$$ \norm{f}_{B_{\infty,\infty}^s} := \sup_{t>0} t^{-s/2} \norm{P_tf}_\infty$$
is finite. The aim of this section is to prove the following improved Sobolev inequality.

\begin{thm} \label{besovineq}
Let $1\leq p<q<\infty$. For any $f$ such that $\norm{\nabla_k f}_p<\infty$ we have the inequality
$$ \norm{f}_q \leq C \norm{\nabla_k f}_p^{p/q} \norm{f}_{B_{\infty,\infty}^{p/(p-q)}}^{1-p/q},$$
where $C>0$ is a constant.
\end{thm}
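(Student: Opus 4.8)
The plan is to follow the method of Ledoux as indicated in the introduction, interpolating between the pseudo-Poincar\'e inequality (which controls $\norm{f - P_t f}_p$ by a power of $t$ times $\norm{\nabla_k f}_p$) and the Besov norm, which controls $\norm{P_t f}_\infty$. First I would establish the building blocks: the pseudo-Poincar\'e inequality should say that there is a constant $C$ with $\norm{f - P_t f}_p \leq C \sqrt{t}\, \norm{\nabla_k f}_p$ for $1 \leq p \leq 2$, proved using the heat kernel representation and the gradient/heat-semigroup commutation (writing $f - P_t f = -\int_0^t \Delta_k P_s f \, ds = -\int_0^t \nabla_k \cdot (\nabla_k P_s f)\, ds$ or, more robustly for the $L^p$ bound, using the integral form $f - P_t f = \int_0^t P_s(-\Delta_k f)\,ds$ together with gradient estimates on the heat kernel). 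This is presumably the content of an earlier proposition in Section 5 which I may invoke.

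The core argument then runs as follows. Fix $t>0$ and split $\norm{f}_q \leq \norm{f - P_t f}_q + \norm{P_t f}_q$. For the second term, I would interpolate the $L^q$ norm between $L^p$ and $L^\infty$: since $P_t$ is a contraction on every $L^r(\mu_k)$ (positivity-preserving, mass-preserving kernel), one would like instead to bound $\norm{P_t f}_q$ by $\norm{P_{t/2} f}_\infty^{1 - \theta} \norm{P_{t/2} f}_p^{\theta}$ via H\"older with $\frac1q = \frac{\theta}{p}$, and then $\norm{P_{t/2} f}_\infty \leq C t^{p/(2(p-q))}\cdot$(nothing) is not quite it — rather $\norm{P_{t/2} f}_\infty \leq (t/2)^{s/2}\norm{f}_{B^s_{\infty,\infty}}$ by definition of the Besov norm with $s = p/(p-q) < 0$, and $\norm{P_{t/2}f}_p \leq \norm{f}_p$. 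Actually the cleaner route, which is Ledoux's, is to bound $\norm{f}_q$ directly: write $\norm{f}_q^q = \int |f|^{q-p}|f|^p\,d\mu_k$ and estimate $\norm{f}_\infty$-type factors against the Besov norm after the $f \to P_t f$ replacement. So the key steps in order are: (i) recall/state the pseudo-Poincar\'e inequality for $1\le p\le 2$; (ii) for fixed $t$, estimate $\norm{f}_q$ by $\norm{f-P_tf}_q + \norm{P_tf}_q$ — or handle $\norm{f}_q$ more directly — using $\norm{P_tf}_\infty \le t^{s/2}\norm{f}_{B^s_{\infty,\infty}}$ and contractivity of $P_t$ on $L^p$; (iii) combine to get $\norm{f}_q \le A\, t^{1/2}\norm{\nabla_k f}_p \cdot (\text{power}) + B\, t^{s/2}\norm{f}_{B^s_{\infty,\infty}}\cdot(\text{power})$, or more precisely an expression of the form $\norm{f}_q \le C\big(t^{a}\norm{\nabla_k f}_p + t^{b}\norm{f}_{B^s_{\infty,\infty}}^{\,c}\big)$ with exponents chosen so that optimizing over $t>0$ yields the stated exponents $p/q$ and $1-p/q$; (iv) minimize over $t$ (balance the two terms by choosing $t$ appropriately) to produce the geometric-mean-type bound.

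The exponent bookkeeping is where I would be most careful: one needs $s = p/(p-q)$ to be exactly the Besov regularity for which the $t$-powers coming from the pseudo-Poincar\'e term ($t^{1/2}$ against $\norm{\nabla_k f}_p$) and from the Besov term ($t^{-|s|/2} = t^{p/(2(p-q))}$ against $\norm{f}_{B^s}$) combine, after raising to appropriate powers reflecting an $L^p$-$L^\infty$ interpolation of the $L^q$ norm, to give a product of the form $\norm{\nabla_k f}_p^{p/q}\norm{f}_{B^s_{\infty,\infty}}^{1-p/q}$ upon optimizing in $t$. Concretely I expect to interpolate $\norm{g}_q \le \norm{g}_p^{p/q}\norm{g}_\infty^{1-p/q}$ applied to $g$ related to $f$ after a dyadic or continuous decomposition in $t$, which is exactly the device that converts the two one-parameter estimates into the final inequality; getting the H\"older exponents in this interpolation to match $p/q$ and $1-p/q$ is the routine-but-delicate part.

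The main obstacle I anticipate is not conceptual but the precise form of the pseudo-Poincar\'e inequality in the Dunkl setting for $p=1$ and its interaction with $\norm{\nabla_k f}_p$ rather than $\norm{\nabla f}_p$: one must ensure the heat-kernel gradient bounds (or the carr\'e-du-champ / Dirichlet-form identities from Section 3, e.g. the relation $\int \Gamma(f)\,d\mu_k = \int|\nabla_k f|^2\,d\mu_k$ and Proposition \ref{Gammaboundgradient}) are strong enough to yield an $L^p$ bound on $f - P_t f$ by $\sqrt t\,\norm{\nabla_k f}_p$ for all $1\le p\le 2$, and in particular that the $|G|$-many reflection terms in $\nabla_k$ do not obstruct the estimate. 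Once that inequality is in hand, the Ledoux interpolation-in-$t$ argument is essentially formal, and the stated theorem follows by optimizing the free parameter $t$.
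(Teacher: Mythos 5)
You have correctly identified the two building blocks the paper uses: the pseudo-Poincar\'e inequality $\norm{f-P_tf}_p\le C\sqrt{t}\,\norm{\nabla_k f}_p$ (Proposition \ref{pseudopoincare}, valid for $1\le p\le 2$, proved via the carr\'e-du-champ and heat-kernel gradient bounds exactly as you anticipate) and the Besov bound $\norm{P_tf}_\infty\le t^{s/2}\norm{f}_{B^s_{\infty,\infty}}$. However, the assembly you propose in steps (ii)--(iv) --- fix a single $t$, split $\norm{f}_q\le\norm{f-P_tf}_q+\norm{P_tf}_q$, and optimize over $t$ --- does not close. The pseudo-Poincar\'e inequality controls only the $L^p$ norm of $f-P_tf$, not its $L^q$ norm, and there is no bound on $\norm{f-P_tf}_\infty$ available to interpolate up to $L^q$; likewise $\norm{P_tf}_q$ cannot be usefully interpolated between $\norm{f}_p$ (which is not assumed finite) and $\norm{P_tf}_\infty$. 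What a single-parameter argument of this shape actually yields is only the \emph{weak}-type inequality: Chebyshev at level $t$ with the level-dependent time $s_t=t^{2(\theta-1)/\theta}$ gives $t^q\mu_k(|f|\ge 2t)\le C\norm{\nabla_kf}_p^p$, i.e.\ a bound on $\norm{f}_{q,w}$. This is precisely Step~1 of the paper's proof, and it is where your scheme stops.

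The genuinely missing idea is the upgrade from weak to strong type. If you integrate the level-set bound $\mu_k(|f|\ge t)\lesssim t^{-q}\norm{\nabla_kf}_p^p$ against $\diff(t^q)$ you get a divergent logarithmic integral. The paper's (Ledoux's) fix is to introduce, for each level $t$, the truncation $\tilde f_t=(f-t)_+\wedge(ct)+(f+t)_-\vee(-ct)$ and apply the pseudo-Poincar\'e inequality to $\tilde f_t$ rather than to $f$: since $\nabla_k\tilde f_t$ is supported on $\{t\le|f|\le(c+1)t\}$, the $t$-integral collapses by Fubini to $\log(c+1)\int|\nabla_kf|^p\diff\mu_k$. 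The price is an error term $P_{s_t}(|\tilde f_t-f|)$, which contributes $\frac{q}{q-1}\frac{5^q}{(c+1)^{q-1}}\int|f|^q\diff\mu_k$ and must be absorbed into the left-hand side by taking the truncation parameter $c$ large --- which in turn requires knowing a priori that $\norm{f}_q<\infty$, handled by a separate limiting argument (the paper's Step~3, using the weak inequality and truncated integrals $N_\epsilon(f)$). Your phrase ``a dyadic or continuous decomposition in $t$'' gestures at this, but without the truncation device and the absorption step the argument as written would fail; this is the part of the proof that is more than formal bookkeeping.
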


This inequality, in the classical case of simple derivatives, was first proved in \cite{CDVPX} using wavelets. A simplified proof was given by Ledoux in \cite{L}. Similar inequalities were also studied in \cite{CDDDV} and \cite{BEHL}. Here we will follow Ledoux's method that makes use of the known bounds on the Dunkl heat kernel.

The essential ingredient in this proof is the following pseudo-Poincar\'e inequality.

\begin{prop} \label{pseudopoincare}
For any $1\leq p \leq 2$, there exists a constant $C>0$ such that for all $f$ we have
$$ \norm{f-P_tf}_p \leq C\sqrt{t} \norm{\nabla_k f}_p.$$
\end{prop}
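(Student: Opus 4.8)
The plan is to follow Ledoux's classical strategy, writing $f - P_t f$ as a time integral of the heat semigroup generator. By the fundamental theorem of calculus,
$$ f - P_t f = -\int_0^t \frac{d}{ds} P_s f \, ds = -\int_0^t \Delta_k P_s f \, ds = -\int_0^t P_s \Delta_k f \, ds. $$
This alone does not quite work since $\Delta_k$ is a second-order operator; instead the key trick is to exploit that $\Delta_k = \sum_i T_i^2$ and integrate by parts against the heat kernel, moving one Dunkl derivative $T_i$ off $f$ and onto the kernel $h_s(x,y)$. That is, one writes
$$ f(x) - P_t f(x) = -\int_0^t \sum_{i=1}^N \int_{\RR^N} T_{i,y} h_s(x,y)\, T_i f(y)\, w_k(y)\, dy \, ds, $$
using the integration-by-parts formula for Dunkl operators stated in the preliminaries. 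Taking $L^p$ norms and using Minkowski's integral inequality reduces the problem to bounding the operator with kernel $T_{i,y} h_s(x,y)$ on $L^p$.

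The main estimate needed is therefore a gradient bound on the heat kernel: one needs
$$ \int_{\RR^N} |T_{i,y} h_s(x,y)|\, w_k(x)\, dx \leq \frac{C}{\sqrt{s}} $$
(or the analogous bound integrating in $y$), uniformly in $s$ and the other variable. Granting this, Minkowski's inequality and Young-type reasoning for the convolution-like operator give $\norm{\int_{\RR^N} T_{i,y} h_s(x,y) T_i f(y) w_k(y) dy}_p \leq \frac{C}{\sqrt{s}} \norm{\nabla_k f}_p$, and then integrating $\int_0^t \frac{ds}{\sqrt{s}} = 2\sqrt{t}$ yields exactly $\norm{f - P_t f}_p \leq C \sqrt{t}\, \norm{\nabla_k f}_p$. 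The restriction $1 \leq p \leq 2$ is expected to enter precisely here: the Dunkl translation / heat semigroup is only known to be $L^p$-bounded (and to have good kernel estimates of the type needed) in a restricted range, and the carré-du-champ identity $\int \Gamma(f)\, d\mu_k = \int |\nabla_k f|^2\, d\mu_k$ together with Lemma \ref{dunkl-euclidean-dirichletform} is the natural tool for $p=2$, interpolated down to $p=1$ via the kernel bound (\ref{heatkernelbounds}).

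Concretely, the order of steps I would carry out is: (1) write the Duhamel identity $f - P_t f = -\int_0^t P_s \Delta_k f\, ds$ and justify it for Schwartz $f$; (2) integrate by parts in each $T_i^2$ to land on $\int_0^t \sum_i T_{i,y} h_s(x,y) T_i f(y) \, d\mu_k(y)\, ds$; (3) establish the pointwise/integral gradient heat-kernel bound $\int |T_{i,y} h_s(x,y)| d\mu_k \lesssim s^{-1/2}$ by differentiating the explicit formula for $h_t$ and using the known growth estimates on $\partial^\beta E_k$ together with the Gaussian bound (\ref{heatkernelbounds}); (4) apply Minkowski's integral inequality in $L^p(\mu_k)$, $1 \leq p \leq 2$, to get the $s^{-1/2} \norm{\nabla_k f}_p$ bound inside the time integral; (5) integrate $\int_0^t s^{-1/2}\, ds = 2\sqrt t$ and conclude; (6) extend from Schwartz functions to general $f$ with $\norm{\nabla_k f}_p < \infty$ by density.

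The hard part will be step (3): obtaining a clean, scale-invariant $L^1(\mu_k)$ bound on the Dunkl-gradient of the heat kernel. The subtlety is that $T_{i,y} h_s(x,y)$ contains both a genuine derivative term $\partial_{y_i}$ and a difference term coming from the reflections, and one must control the difference quotient $\frac{h_s(x,y) - h_s(x,\sigma_\alpha y)}{\langle \alpha, y\rangle}$ uniformly; here the homogeneity of the heat kernel in the combined variable $(x/\sqrt{s}, y/\sqrt{s})$ is what produces the factor $s^{-1/2}$, and the $G$-invariance together with the Gaussian upper bound (\ref{heatkernelbounds}) should absorb the reflection terms. An alternative, possibly cleaner route for $p=2$ is to bypass the kernel estimate entirely: use spectral calculus / Plancherel for the Dunkl transform to get $\norm{f - P_t f}_2^2 = \int (1 - e^{-t|\xi|^2})^2 |\DT f(\xi)|^2 d\mu_k \leq \int \min(1, t|\xi|^2)|\DT f|^2 d\mu_k \leq t \int |\xi|^2 |\DT f|^2 d\mu_k = t \norm{\nabla_k f}_2^2$, and then interpolate against the trivial $L^1$-contractivity $\norm{f - P_t f}_1 \leq 2\norm{f}_1$ only after first establishing the gradient bound — but since interpolation of the inequality as stated is not straightforward, I expect the kernel-based argument of step (3) to remain the genuine obstacle for $p < 2$.
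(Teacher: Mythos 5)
Your overall skeleton (Duhamel formula, one integration by parts, a scale-invariant bound of order $s^{-1/2}$, then $\int_0^t s^{-1/2}\,ds = 2\sqrt t$) is reasonable, but the whole weight of the proof rests on your step (3), the bound $\int_{\RR^N} |T_{i,y} h_s(x,y)|\,\diff\mu_k \lesssim s^{-1/2}$, and this is a genuine gap: it does not follow from the tools you invoke, and it is not established in the paper. The obstruction is exactly the one you flag and then set aside. The Dunkl derivative $T_{i,y}h_s(x,y)$ contains the difference quotients $\frac{h_s(x,y)-h_s(x,\sigma_\alpha y)}{\langle\alpha,y\rangle}$, and controlling these in $L^1(\mu_k)$ near the hyperplanes $\langle\alpha,y\rangle=0$ requires cancellation, i.e.\ quantitative regularity of $y\mapsto h_s(x,y)$ across the reflection hyperplanes. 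The Gaussian upper bound (\ref{heatkernelbounds}) cannot supply this: because of the $\max_{g\in G}$ it is invariant under $y\mapsto\sigma_\alpha y$, so it gives no smallness whatsoever for the difference $h_s(x,y)-h_s(x,\sigma_\alpha y)$, and the growth estimates on $\partial_y^\beta E_k$ control only ordinary derivatives of the kernel $E_k$, not of the full heat kernel in a way that survives the division by $\langle\alpha,y\rangle$. Such gradient heat-kernel estimates are a known hard point of Dunkl analysis and are simply not available here. A second warning sign is that if your step (3) held, your argument would prove the inequality for \emph{all} $1\le p<\infty$ by Schur's test, so the restriction $p\le 2$ cannot enter where you place it (the semigroup $P_t$ is a positivity-preserving contraction on every $L^p(\mu_k)$, so $L^p$-boundedness of $P_t$ is not the issue).

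The paper avoids kernel estimates entirely. It dualizes: for $\norm{g}_{p^*}\le 1$ it writes $\int g(f-P_tf)\,\diff\mu_k = \int_0^t\int \nabla_k f\cdot\nabla_k P_s g\,\diff\mu_k\,\diff s \le \norm{\nabla_k f}_p\int_0^t\norm{\nabla_k P_s g}_{p^*}\diff s$, and then proves the smoothing estimate $\norm{\nabla_k P_s g}_{p^*}\le (Cs)^{-1/2}\norm{g}_{p^*}$ by purely functional (carr\'e-du-champ) means: the semigroup interpolation identity $P_s(g^2)-(P_sg)^2 = 2\int_0^s P_u(\Gamma(P_{s-u}g))\,\diff u$, the pointwise bound $\Gamma(h)\ge C|\nabla_k h|^2$ of Proposition \ref{Gammaboundgradient}, and the commutation bound $|\nabla_k P_t h|\le C P_t(|\nabla_k h|)$ yield the reverse Poincar\'e inequality $P_s(g^2)-(P_sg)^2\ge Cs|\nabla_k P_sg|^2$ (Lemma \ref{reversePoincare}), and then Jensen's inequality applied to the convex function $|\cdot|^{p^*/2}$ gives the $L^{p^*}$ bound. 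This is where $1\le p\le 2$ really enters: one needs $p^*\ge 2$ for that convexity. If you want to salvage your route for $p=2$ alone, your Plancherel computation is correct and complete; for $p<2$ you would need either the paper's duality argument or genuinely new gradient estimates on the Dunkl heat kernel.
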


This type of inequality is interesting in its own right as it has proven to be a useful tool for proving Sobolev inequalities in particularly difficult geometric settings. It can be stated more generally as 
$$ \norm{f-A_r f}_p \leq C r \norm{\nabla f}_p,$$
where $A_r$ is some sort of averaging operator. See \cite{SC} and \cite{SC2} (and references therein) for more details, as well as some historical remarks.

Before we start proving the pseudo-Poincar\'e inequality, we need the following two lemmas. The first of these concerns gradient bounds, while the second one provides a Poincar\'e-type inequality for the semigroup $P_t$. 

\begin{lem} \label{gradientbound}
The following inequality holds
$$ |\nabla_k P_tf| \leq C P_t(|\nabla_kf|) $$
for a constant $C>0$. 
\end{lem}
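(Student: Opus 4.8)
The plan is to exploit the fact that the Dunkl heat semigroup $P_t = e^{t\Delta_k}$ commutes with the Dunkl operators $T_i$, which in turn follows from the commutation relation $\DT(T_j f)(\xi) = i\xi_j\DT(f)(\xi)$ already used in the proof of the Nash inequality. Indeed, on the Dunkl transform side the heat semigroup acts as multiplication by $e^{-t|\xi|^2}$, and multiplication operators commute with multiplication by $i\xi_j$, so $T_j P_t f = P_t T_j f$ for all $j$. Hence $\nabla_k P_t f = P_t \nabla_k f$ componentwise, where $P_t$ is applied to each component. The desired bound would then follow from the pointwise inequality $|P_t g(x)| \le P_t(|g|)(x)$ applied with $g$ ranging over the components of $\nabla_k f$, combined with the pointwise bound $\sum_{i=1}^N (P_t T_i f(x))^2 \le \bigl(\sum_{i=1}^N P_t(|T_i f|)(x)\bigr)^2 \le N \sum_{i=1}^N P_t(|T_i f|)(x)^2$ — but one must be slightly careful here to get a clean bound by $C\, P_t(|\nabla_k f|)$ rather than $C\, P_t(\text{something componentwise})$.

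The cleaner route, which I would actually carry out, is as follows. First, establish $T_i P_t f = P_t T_i f$ rigorously: write $P_t f(x) = \int_{\RR^N} h_t(x,y) f(y)\, w_k(y)\diff y$, and use that $h_t(x,y)$ is, up to the Gaussian and normalization factors, the Dunkl kernel $E_k(x/\sqrt{2t}, y/\sqrt{2t})$, together with the intertwining/eigenfunction properties of $E_k$ and integration by parts in $y$, or more simply invoke the Dunkl-transform identities and Plancherel. Then, for each fixed $x$, apply the Cauchy–Schwarz inequality in the measure $h_t(x,y) w_k(y)\diff y$ (which is a probability measure in $y$ since $P_t 1 = 1$) to the vector-valued function $y \mapsto \nabla_k f(y)$:
\[
|\nabla_k P_t f(x)|^2 = \Bigl| \int h_t(x,y)\, \nabla_k f(y)\, w_k(y)\diff y \Bigr|^2 \le \int h_t(x,y)\, |\nabla_k f(y)|^2\, w_k(y)\diff y = P_t\bigl(|\nabla_k f|^2\bigr)(x).
\]
This already gives $|\nabla_k P_t f| \le \bigl(P_t(|\nabla_k f|^2)\bigr)^{1/2}$, which is a clean gradient bound; if the statement with $P_t(|\nabla_k f|)$ on the right is what is needed downstream, one passes between the two using that $P_t$ is a contraction and monotone, or observes that in the subsequent applications only an $L^p\to L^p$ bound on $|\nabla_k P_t f|$ is used, for which either form suffices (by Jensen's inequality applied to $P_t$, $\|P_t(|\nabla_k f|^2)^{1/2}\|_p \le \|P_t(|\nabla_k f|)\|_p^{?}$ needs care, so I would present the $L^2$-type bound and note the $L^p$ consequence via $\|P_t g\|_p \le \|g\|_p$ combined with $\||\nabla_k P_t f|\|_p = \|P_t \nabla_k f\|_p \le \||\nabla_k f|\|_p$).

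Actually, the shortest honest proof is: from $\nabla_k P_t f = P_t(\nabla_k f)$ componentwise and $|P_t g| \le P_t|g|$, we get $|T_i P_t f| \le P_t|T_i f| \le P_t(|\nabla_k f|)$ for each $i$, hence $|\nabla_k P_t f| = \bigl(\sum_i |T_i P_t f|^2\bigr)^{1/2} \le \sqrt{N}\, P_t(|\nabla_k f|)$, giving the claim with $C = \sqrt{N}$. I expect the main obstacle to be the rigorous justification of the commutation $T_i P_t f = P_t T_i f$ — specifically, justifying differentiation under the integral sign and the relevant integration by parts, using the growth estimates on $\partial_y^\beta E_k$ quoted in the preliminaries to control everything, and handling the non-local difference part of $T_i$ (which requires that $h_t(\sigma_\alpha x, y) = h_t(x, \sigma_\alpha y)$, a consequence of the $G$-equivariance of the Dunkl kernel). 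Everything after that is the elementary Cauchy–Schwarz / Jensen step above.
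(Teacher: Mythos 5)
Your proposal is correct and takes essentially the same route as the paper: commute $\nabla_k$ with $P_t$ (justified via the Dunkl transform / commutativity of the $T_i$), then use positivity of the heat kernel together with $|T_if|\leq|\nabla_k f|$ and an elementary $\ell^1$--$\ell^2$ comparison to arrive at the bound with $C=\sqrt{N}$. The paper's proof is exactly your ``shortest honest proof'' paragraph, and it is no more careful than you are about the commutation step.
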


\begin{proof}
We first note that since the Dunkl operators commute, then $\nabla_k$ and $P_t$ also commute. Then, using the inequality $\sum_i a_i^2 \leq \left(\sum_i a_i\right)^2$ which holds for nonnegative real numbers $a_i$, and then Cauchy-Schwarz, we obtain
\begin{align*}
|\nabla_k P_tf| (x)
= |P_t \nabla_k f| (x)
&= \left( \sum_{i=1}^N \left(\int_{\RR^N} T_if(y) h_t(x,y)  \diff\mu_k(y) \right)^2\right)^{1/2} 
\\
&\leq \sum_{i=1}^N \int_{\RR^N}|T_if(y)| h_t(x,y) \diff\mu_k(y) 
\\
&\leq \sqrt{N} \int_{\RR^N} |\nabla_k f(y)| h_t(x,y)\diff\mu_k(y)
\\
&=\sqrt{N} P_t(|\nabla_k f|)(x),
\end{align*}
as required. 
\end{proof}

\begin{lem} \label{reversePoincare}
The following inequality holds
$$ P_t(f^2) - (P_tf)^2 \geq Ct |\nabla_k P_t f|^2,$$
for a constant $C>0$.
\end{lem}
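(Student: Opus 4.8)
The plan is to recognize the left-hand side as a time-integral of the carré-du-champ operator along the heat flow, and then use the gradient bound of Lemma \ref{gradientbound} together with the pointwise estimate $\Gamma(f)\geq C|\nabla_k f|^2$ from Proposition \ref{Gammaboundgradient}. Concretely, fix $t>0$ and $x$, and consider the function $\phi(s) = P_s\big((P_{t-s}f)^2\big)(x)$ for $s\in[0,t]$. Then $\phi(0) = (P_t f)^2(x)$ and $\phi(t) = P_t(f^2)(x)$, so that $P_t(f^2)(x) - (P_t f)^2(x) = \int_0^t \phi'(s)\,\diff s$. Differentiating under the semigroup and using that $\partial_s P_s = \Delta_k P_s$ together with $\partial_s P_{t-s}f = -\Delta_k P_{t-s}f$, the two terms combine via the definition of the carré-du-champ operator to give
$$ \phi'(s) = P_s\Big(\Delta_k\big((P_{t-s}f)^2\big) - 2(P_{t-s}f)\Delta_k(P_{t-s}f)\Big)(x) = 2\, P_s\big(\Gamma(P_{t-s}f)\big)(x).$$
Hence
$$ P_t(f^2) - (P_t f)^2 = 2\int_0^t P_s\big(\Gamma(P_{t-s}f)\big)\,\diff s.$$

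Now I would bound the integrand from below. By Proposition \ref{Gammaboundgradient} we have $\Gamma(P_{t-s}f) \geq c\,|\nabla_k P_{t-s}f|^2$ for a constant $c>0$, and since $P_s$ is positivity-preserving this gives $P_s\big(\Gamma(P_{t-s}f)\big) \geq c\, P_s\big(|\nabla_k P_{t-s}f|^2\big)$. To push the gradient all the way out to time $t$, note that $\nabla_k P_{t-s}f = \nabla_k P_{-s} (P_{t}f)$... — more carefully, using the semigroup property $P_{t-s}f = P_{t-s}f$ and the gradient bound from Lemma \ref{gradientbound} applied to $P_{t-s}f$ with the remaining time $s$: we have $|\nabla_k P_t f| = |\nabla_k P_s (P_{t-s}f)| \leq C P_s(|\nabla_k P_{t-s}f|)$, and then by Jensen (Cauchy--Schwarz) applied to the probability-type kernel $h_s(x,\cdot)w_k$ — which has total mass $1$ — we get $\big(P_s(|\nabla_k P_{t-s}f|)\big)^2 \leq P_s\big(|\nabla_k P_{t-s}f|^2\big)$. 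Combining, $P_s\big(|\nabla_k P_{t-s}f|^2\big) \geq C^{-2} |\nabla_k P_t f|^2$, a bound uniform in $s\in[0,t]$. Substituting into the integral,
$$ P_t(f^2) - (P_t f)^2 \geq 2c\int_0^t P_s\big(|\nabla_k P_{t-s}f|^2\big)\,\diff s \geq \frac{2c}{C^2}\, t\, |\nabla_k P_t f|^2,$$
which is the claim with constant $2c/C^2$.

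I expect the main obstacle to be the justification of the differentiation of $\phi(s)$ under the integral sign and the interchange of $\Delta_k$ with the semigroup — i.e., the analytic regularity needed for the Duhamel-type identity $P_t(f^2) - (P_tf)^2 = 2\int_0^t P_s(\Gamma(P_{t-s}f))\,\diff s$ to hold rigorously. This requires knowing that $P_sg$ is smooth for $s>0$ with controlled derivatives, and that $\Gamma(P_{t-s}f)$ is integrable against $h_s(x,\cdot)w_k$; these follow from the Gaussian-type bounds \eqref{heatkernelbounds} on the Dunkl heat kernel and its derivatives, but the details (especially near the endpoints $s=0$ and $s=t$, and for $f$ in the generality of Theorem \ref{besovineq}) need care, likely handled first for Schwartz $f$ and then by density. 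A secondary point is confirming that $\int_{\RR^N} h_s(x,y)\,w_k(y)\,\diff y = 1$ so that Jensen's inequality applies in the form used; this is standard (apply the semigroup to the constant function $1$), but should be noted.
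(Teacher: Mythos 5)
Your proposal is correct and follows essentially the same route as the paper: the Duhamel identity $P_t(f^2)-(P_tf)^2=2\int_0^t P_s(\Gamma(P_{t-s}f))\,\diff s$, the pointwise bound $\Gamma\geq C|\nabla_k\cdot|^2$ from Proposition \ref{Gammaboundgradient}, and the combination of Lemma \ref{gradientbound} with Cauchy--Schwarz/Jensen for the kernel $h_s(x,\cdot)w_k$ (the paper merely applies these last two steps in the opposite order). Your closing remarks on justifying the differentiation and on $\int h_s(x,y)w_k(y)\,\diff y=1$ flag genuine technical points that the paper leaves implicit.
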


\begin{proof}
We note that
\begin{align*}
P_t(f^2) - (P_tf)^2
&= \int_0^t \frac{\diff}{\diff s} (P_s((P_{t-s}f)^2)) \diff s
\\
&=\int_0^t P_s \left( \Delta_k((P_{t-s}f)^2)-2P_{t-s}f\Delta_k(P_{t-s}f) \right) \diff s
\\
&=2\int_0^t P_s\left( \Gamma(P_{t-s}f) \right) \diff s.
\end{align*}
Using first Proposition \ref{Gammaboundgradient}, and then Cauchy-Schwarz, this becomes
\begin{align*}
P_t(f^2) - (P_tf)^2
&\geq 2C\int_0^t P_s (|\nabla_k P_{t-s}f|^2) \diff s
\\
&\geq 2C\int_0^t (P_s (|\nabla_k P_{t-s} f|))^2 \diff s.
\end{align*}
Finally, using Lemma \ref{gradientbound}, we obtain
\begin{align*}
P_t(f^2) - (P_tf)^2
&\geq \frac{2C}{\sqrt{N}} \int_0^t |\nabla_k P_t f|^2 \diff s
\\
&= \frac{2C}{\sqrt{N}} t |\nabla_k P_t f|^2.
\end{align*}
\end{proof}

We can now prove the pseudo-Poincar\'e inequality. 

\begin{proof} [Proof of Proposition \ref{pseudopoincare}]
We note that 
$$ P_tf -f 
= \int_0^t \frac{\diff}{\diff s} (P_s f) \diff s
= \int_0^t \Delta_k P_sf \diff s.$$
Fix a smooth function $g$ such that $\norm{g}_{p^*}\leq 1$, where $\frac{1}{p}+\frac{1}{p^*}=1$. Then
\begin{align*}
\int_{\RR^N} g(f-P_tf)\diff\mu_k 
&=\int_{\RR^N} (g-P_tg)f\diff\mu_k 
\\
&=-\int_0^t \int_{\RR^N} f \Delta_kP_sg \diff\mu_k \diff s
\\
&= \int_0^t \int_{\RR^N} \nabla_k f \cdot \nabla_kP_sg \diff\mu_k \diff s
\\
&\leq \norm{\nabla_k f}_p \int_0^t \norm{\nabla_k P_sg}_{p^*} \diff s.
\end{align*}
From Lemma \ref{reversePoincare} we have 
$$ P_s(g^2) \geq P_s(g^2)-(P_sg)^2 \geq Ct|\nabla_kP_s g|^2.$$
Since $p^*\geq 2$, then the function $|\cdot|^{p^*/2}$ is convex and so, using Jensen's inequality, 
$$ \int_{\RR^N} |\nabla_k P_sg|^{p^*} \diff\mu_k 
\leq (Cs)^{-p^*/2} \int_{\RR^N} P_s(g^2)^{p^*/2} \diff\mu_k 
\leq (Cs)^{-p^*/2} \int_{\RR^N} P_s(|g|^{p^*}) \diff\mu_k,$$
and thus
$$ \norm{\nabla_kP_s g}_{p^*} 
\leq \frac{1}{\sqrt{Cs}} \norm{P_s(|g|^{p^*})}_1^{1/p^*} 
\leq \frac{1}{\sqrt{Cs}} \norm{g}_{p^*}
\leq \frac{1}{\sqrt{Cs}}.$$
Here we used the fact that $P_t$ is a contraction on $L^1(\mu_k)$. Therefore
$$ \int_{\RR^N} g(f-P_tf)\diff\mu_k  \leq C \sqrt{t} \norm{\nabla_k f}_p.$$
Since $g$ was arbitrary, this shows that 
$$ \norm{f-P_tf}_p \leq C\sqrt{t} \norm{\nabla_k f}_p,$$
as required.
\end{proof}

We are now in a position to give a proof of the improved Sobolev inequality.

\begin{proof} [Proof of Theorem \ref{besovineq}]
The proof goes in three steps: the first step is proving a weak form of the inequality, the second step is proving the inequality for $f$ that satisfies the additional assumption $f\in L^q$, and finally, in the last step, we remove the assumption $f\in L^q$ from the previous step.

\textbf{Step 1.} In this step we establish the weak inequality
\begin{align} \label{weakineq}
\norm{f}_{q,w} \leq C \norm{\nabla_k f}_p^\theta \norm{f}^{1-\theta}_{B_{\infty,\infty}^{\theta/(\theta-1)}},
\end{align}
where $\theta = \frac{p}{q}\in (0,1)$, and the weak $L^q$ norm is defined by
$$ \norm{f}_{q,w}^q:=\sup_{t>0} t^q \mu_k\left( \{ |f(x)|>t\}\right).$$
By homogeneity, we can assume that $\norm{f}_{B_{\infty,\infty}^{\theta/(\theta-1)}}\leq 1$, i.e., 
\begin{equation} \label{ptbound}
|P_tf|\leq t^{\theta/2(\theta-1)} \qquad \text{ for all } t>0.
\end{equation}
For every $t>0$, let $s_t=t^{2(\theta-1)/\theta}$, so, by (\ref{ptbound}), we have $|P_{s_t}f|\leq t$.

If $|f|\geq 2t$, then $|f-P_{s_t}f|\geq |f| - |P_{s_t}f| \geq t$, so
\begin{equation}
\mu_k \left( \{ |f(x)| \geq 2t \} \right) 
\leq \mu_k \left( |f-P_{s_t}f| \geq t \right) 
\leq t^{-p} \int_{\RR^N} |f-P_{s_t}f|^p \diff\mu_k.
\end{equation}
Using the pseudo-Poincar\'e inequality from Proposition \ref{pseudopoincare}, this implies that
\begin{equation}
t^q \mu_k \left( \{ |f(x)| \geq 2t \} \right)
\leq C t^{q-p} s_t^{p/2} \norm{\nabla_k f}_p^p.
\end{equation}
But from the choice of $s_t$, and because $\theta=\frac{p}{q}$, we have $t^{q-p}s_t^{p/2}=1$, so taking infimum over $t>0$, by the definition of the weak $L^q$ norm, we finally obtain (\ref{weakineq}).

\textbf{Step 2.} Here we will impose the additional assumption $f\in L^q$. As before, we can assume by homogeneity that $\norm{f}_{B_{\infty,\infty}^{\theta/(\theta-1)}}= 1$, so we need to prove that 
\begin{equation} \label{besovstep2} 
\int_{\RR^N} |f|^q \diff\mu_k \leq C \int_{\RR^N} |\nabla_k f|^p \diff\mu_k.
\end{equation}
Keeping with the notation above, for any $t>0$ we let $s_t=t^{2(\theta-1)/\theta}$ and so $|P_{s_t}f| \leq t$.

Using the cake layer representation theorem (see \cite[Theorem 1.13]{LL}) we have
\begin{equation} \label{cakelayer5}
\int_{\RR^N} |f|^q \diff\mu_k = 5^q \int_0^\infty \mu_k \Big( |f| \geq 5t \Big) \diff (t^q).
\end{equation}

Fix a constant $c\geq 4$. Define the function 
$$ \tilde{f}_t = (f-t)_+ \wedge (ct) + (f+t)_- \vee (-ct).$$
If $|f(x)|\geq 5t$, then either $|\tilde{f}(x)|=|f(x)|-t$, or $|\tilde{f}|=ct$. It then follows that (recall that we required $c\geq 4$)
$$ \{ |f|\geq 5t \} \subset \{|\tilde{f}|\geq 4t\},$$
and thus 
\begin{align} \label{5t-4t}
\int_0^\infty \mu_k\Big( |f|\geq 5t\Big) \diff (t^q) 
\leq \int_0^\infty \mu_k\left( |\tilde{f}|\geq 4t\right) \diff (t^q).
\end{align}
The triangle inequality implies that
\begin{align*}
|\tilde{f}| 
&\leq |\tilde{f}-P_{s_t}\tilde{f}| + |P_{s_t}(\tilde{f}-f)| + |P_{s_t}f|
\\
&\leq |\tilde{f}-P_{s_t}\tilde{f}| + P_{s_t}(|\tilde{f}-f|) + t,
\end{align*}
where we used the convexity of the modulus function and the fact that $|P_{s_t}f|\leq t$. It follows that
$$ \{|\tilde{f}| \geq 4t \} \subset \{ |\tilde{f}-P_{s_t}\tilde{f}| \geq t\} \cup \{P_{s_t}(|\tilde{f}-f|) \geq 2t\}.$$
Hence
\begin{equation} \label{4tineq}
\begin{aligned}
\int_0^\infty \mu_k\left( |\tilde{f}|\geq 4t\right) \diff (t^q)
&\leq 
	\int_0^\infty \mu_k \left( |\tilde{f}-P_{s_t}\tilde{f}| \geq t\right) \diff (t^q)
\\
&\qquad +
	\int_0^\infty \mu_k \left( P_{s_t}(|\tilde{f}-f|) \geq 2t\right) \diff (t^q).
\end{aligned}
\end{equation}
We will estimate the two terms appearing on the right hand side of this inequality separately. Firstly, 
\begin{align*}
\mu_k \left( |\tilde{f}-P_{s_t}\tilde{f}| \geq t\right)
&\leq \int_{\RR^N} \frac{|\tilde{f}-P_{s_t}\tilde{f}|^p}{t^p} \diff\mu_k 
\\
&\leq C t^{-p} s_t^{p/2} \int_{\RR^N} |\nabla_k \tilde{f}|^p \diff\mu_k,
\end{align*}
where in the second line we used the pseudo-Poincar\'e inequality from Proposition \ref{pseudopoincare}. We note that by the construction of $\tilde{f}$ we have that $\nabla_k \tilde{f} = \nabla_k f$ on $ t \leq |f| \leq (c+1)t$, and $\nabla_k \tilde{f}=0$ otherwise. Consequently, 
\begin{align*}
\int_0^\infty \mu_k \left( |\tilde{f}-P_{s_t}\tilde{f}| \geq t\right) \diff (t^q)
&\leq C \int_0^\infty t^{-q} \int_{\RR^N} \1_{\{t\leq |f| \leq (c+1)t\}} |\nabla_k f|^p \diff\mu_k \diff (t^q)
\\
&= Cq \int_{\RR^N} |\nabla_k f|^p \int_{|f|/(c+1)}^{|f|} \frac{1}{t} \diff t \diff\mu_k
\\
&=Cq \log (c+1) \int_{\RR^N} |\nabla_k f|^p \diff\mu_k.
\end{align*}

Secondly, in order to estimate the second term appearing in (\ref{4tineq}) we first note that
\begin{align*}
|f-\tilde{f}| 
&= |f-\tilde{f}| \1_{\{|f|\leq (c+1)t\}} + |f-\tilde{f}| \1_{\{|f| \geq (c+1)t \}}
\\
&\leq t + |f| \1_{\{|f| \geq (c+1)t \}},
\end{align*}
and using this we have
\begin{align*}
\int_0^\infty \mu_k \left( P_{s_t}(|\tilde{f}-f|) \geq 2t\right) \diff (t^q)
&\leq \int_0^\infty \mu_k \Big( P_{s_t}(|f|\1_{\{|f| \geq (c+1)t \}}) \geq t\Big) \diff (t^q)
\\
&\leq \int_0^\infty \int_{\RR^N} \frac{|f|\1_{\{|f| \geq (c+1)t \}}}{t} \diff\mu_k \diff (t^q)
\\
&=\int_{\RR^N} |f| \int_0^{|f|/(c+1)} q t^{q-2} \diff t \diff\mu_k
\\
&=\frac{q}{q-1} \frac{1}{(c+1)^{q-1}} \int_{\RR^N} |f|^q \diff\mu_k.
\end{align*}

Putting these back in (\ref{4tineq}), and also using (\ref{5t-4t}) and (\ref{cakelayer5}), we finally have
\begin{align*}
\int_{\RR^N} |f|^q \diff\mu_k
\leq C q5^q \log (c+1) \int_{\RR^N} |\nabla_k f|^p \diff\mu_k
+ \frac{q}{q-1} \frac{5^q}{(c+1)^{q-1}} \int_{\RR^N} |f|^q \diff\mu_k.
\end{align*}
Choosing $c\geq 4$ large enough so that $\frac{q 5^q}{q-1} <(c+1)^{q-1}$, we obtain (\ref{besovstep2}) for a constant $C>0$. 

\textbf{Step 3.} This is a technical step in which we remove the unnecessary assumption $f\in L^q$ from the previous step. The proof also follows that of Step 2.

Let $f\in W^{1,p}(\RR^N)$ and, as before, we can assume by homogeneity that $\norm{f}_{B_{\infty,\infty}^{\theta/(\theta-1)}}= 1$. Fix also $0<\epsilon<1$ and define
$$ N_\epsilon(f) = \int_\epsilon^{1/\epsilon} \mu_k \Big( |f| \geq 5t\Big) \diff (t^q).$$
By the weak inequality (\ref{weakineq}) this is a finite quantity. Moreover, following the proof of Step 2, we can bound this as
\begin{align} \label{Nepsilon1}
N_\epsilon(f) \leq Cq \log(c+1) \int_{\RR^N} |\nabla_k f|^p \diff\mu_k 
+ \int_\epsilon^{1/\epsilon} \frac{1}{t} \int_{\RR^N} |f| \1_{\{|f|\geq (c+1)t\}} \diff\mu_k \diff(t^q).
\end{align}
We now estimate the second term on the right hand side of this inequality. Changing the order of integration using Fubini's theorem, we have
\begin{align*}
\int_\epsilon^{1/\epsilon} \frac{1}{t} & \int_{\RR^N} |f| \1_{\{|f|\geq (c+1)t\}} \diff\mu_k \diff(t^q) 
\\
&=q\int_{\RR^N} |f| 
	\left[ 
		\1_{\{\epsilon \leq \frac{|f|}{c+1} \leq 1/\epsilon\}} \int_\epsilon^{|f|/(c+1)} t^{q-2} \diff t
		+\1_{\{\frac{|f|}{c+1}>1/\epsilon\}} \int_\epsilon^{1/\epsilon} t^{q-2} \diff t
	\right]\diff\mu_k
\\
&=\frac{q}{q-1} \int_{\RR^N} 
	\left[
		\1_{\{\epsilon \leq \frac{|f|}{c+1} \leq 1/\epsilon\}} \left( \frac{|f|^{q}}{(c+1)^{q-1}} - \epsilon^{q-1}|f| \right) 
		+ \1_{\{\frac{|f|}{c+1}>1/\epsilon\}} \left( \frac{|f|}{\epsilon^{q-1}}-\epsilon^{q-1}|f|\right)
	\right] \diff\mu_k.
\end{align*}
Using a similar method to the above, we also note that 
\begin{align*}
\int_\epsilon^{1/\epsilon}  \mu_k &\Big( |f|\geq (c+1)t \Big) \diff(t^q) 
\\
&= \int_\epsilon^{1/\epsilon} \int_{\RR^N} \1_{\{|f|\geq (c+1)t\}} \diff\mu_k \diff(t^q)
\\
&=\int_{\RR^N} 
	\left[
		\1_{\{\epsilon \leq \frac{|f|}{c+1} \leq 1/\epsilon\}} \int_\epsilon^{|f|/(c+1)} \diff(t^q) 
		+ \1_{\{\frac{|f|}{c+1}>1/\epsilon\}} \int_\epsilon^{1/\epsilon} \diff(t^q)
	\right]\diff\mu_k
\\
&=\int_{\RR^N} 
	\left[ 
		\1_{\{\epsilon \leq \frac{|f|}{c+1} \leq 1/\epsilon\}} \left(\frac{|f|^q}{(c+1)^q}-\epsilon^q\right)
		+ \1_{\{\frac{|f|}{c+1}>1/\epsilon\}} \left( \frac{1}{\epsilon^q}-\epsilon^q\right)
	\right]\diff\mu_k.
\end{align*}

From these last two computations we can now deduce that 
\begin{align*}
\int_\epsilon^{1/\epsilon} &\frac{1}{t} \int_{\RR^N} |f| \1_{\{|f|\geq (c+1)t\}} \diff\mu_k \diff(t^q)
\leq \frac{q(c+1)}{q-1} \int_\epsilon^{1/\epsilon}  \mu_k \Big( |f|\geq (c+1)t \Big) \diff(t^q)
\\
&\qquad \qquad\qquad 
+ \frac{q}{q-1} \int_{\RR^N} \1_{\{\frac{|f|}{c+1}>1/\epsilon\}} \left( \frac{|f|}{\epsilon^{q-1}} -\frac{c+1}{\epsilon^q} + (c+1)\epsilon^q - |f| \epsilon^{q-1} \right) \diff\mu_k.
\end{align*}
We note also that
\begin{align*}
\int_{1/\epsilon}^\infty \mu_k \Big( |f|\geq (c+1)t \Big) \diff t 
&= \int_{1/\epsilon}^\infty \int_{\RR^N} \1_{\{|f|\geq (c+1)t\}}\diff\mu_k \diff t
\\
&=\int_{\RR^N} \1_{\{\frac{|f|}{c+1}>1/\epsilon\}} \left( \frac{|f|}{c+1}-\frac{1}{\epsilon}\right) \diff\mu_k.
\end{align*}
Finally, these give 
\begin{equation} \label{Nepsilon2}
\begin{aligned}
\int_\epsilon^{1/\epsilon} &\frac{1}{t} \int_{\RR^N} |f| \1_{\{|f|\geq (c+1)t\}} \diff\mu_k \diff(t^q)
\leq \frac{q(c+1)}{q-1} \int_\epsilon^{1/\epsilon}  \mu_k \Big( |f|\geq (c+1)t \Big) \diff(t^q)
\\
&\qquad \qquad\qquad 
+ \frac{q}{q-1} \frac{c+1}{\epsilon^{q-1}} \int_{1/\epsilon}^\infty \mu_k \Big( |f|\geq (c+1)t \Big) \diff t .
\end{aligned}
\end{equation}

At this point we use the definition of the weak norm $\norm{\cdot}_{q,w}$ and a change of variables to obtain
\begin{align*}
\int_\epsilon^{1/\epsilon}  \mu_k \Big( |f|\geq (c+1)t \Big) \diff(t^q)
\leq\left(\frac{5}{c+1}\right)^q N_\epsilon(f) + \frac{q}{(c+1)^q} \log \frac{c+1}{5} \norm{f}_{q,w}^q,
\end{align*}
and also
\begin{align*}
\int_{1/\epsilon}^\infty \mu_k \Big( |f|\geq (c+1)t \Big) \diff t
\leq \frac{\epsilon^{q-1}}{(c+1)^q (q-1)} \norm{f}_{q,w}^q.
\end{align*}

Using these in (\ref{Nepsilon2}) and then (\ref{Nepsilon1}), we have that
\begin{align*}
N_\epsilon(f) 
&\leq Cq\log(c+1) \norm{\nabla_k f}_p^p 
+ \frac{q}{q-1} \frac{5^q}{(c+1)^{q-1}} N_\epsilon(f) 
\\
&\qquad + \frac{q}{q-1} \frac{1}{(c+1)^{q-1}}  \left( q\log\frac{c+1}{5} + \frac{1}{q-1} \right) \norm{f}_{q,w}^q.
\end{align*}
From Step 1 we know that $\norm{f}_{q,w}<\infty$, and from our assumption we have $\norm{\nabla_k f}_p<\infty$. Thus, by choosing $c$ large enough (independent of $\epsilon$), we have $N_\epsilon(f)<\infty$. Therefore, taking the limit $\epsilon\to 0$, this implies that $\norm{f}_q<\infty$. We have now reduced the problem to Step 2, so the proof is complete. 
\end{proof}

We can show that the inequality of Theorem \ref{besovineq} implies the classical Sobolev and Gagliardo-Nirenberg inequalities, but first we need the following ultracontractivity result.

\begin{prop} \label{ultracontractivity}
Let $1\leq p\leq \infty$. Then, there exists a constant $C>0$ such that for any $f\in L^p(\mu_k)$ and for any $t>0$, we have
$$ \norm{P_tf}_\infty \leq C t^{-(2\gamma+N)/2p}\norm{f}_p.$$
\end{prop}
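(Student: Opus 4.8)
The plan is to deduce the $L^p \to L^\infty$ bound from the explicit Gaussian-type bounds on the Dunkl heat kernel recorded in \eqref{heatkernelbounds}, by an interpolation argument. I would first treat the two extreme cases $p=1$ and $p=\infty$ directly, and then obtain intermediate $p$ by the Riesz--Thorin interpolation theorem (which applies in the weighted measure space $L^p(\mu_k)$ with no change).

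For $p=1$: from \eqref{heatkernelbounds} we have the pointwise bound
\[
h_t(x,y) \leq \frac{1}{(2t)^{\gamma+N/2} M_k} \max_{g\in G} e^{-|gx-y|^2/4t} \leq \frac{1}{(2t)^{\gamma+N/2} M_k},
\]
so for $f\in L^1(\mu_k)$,
\[
|P_t f(x)| \leq \int_{\RR^N} h_t(x,y) |f(y)| \diff\mu_k(y) \leq \frac{1}{(2t)^{\gamma+N/2} M_k} \norm{f}_1,
\]
which is exactly the claimed estimate with exponent $-(2\gamma+N)/2$ and constant $C = (2^{\gamma+N/2} M_k)^{-1}$. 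For $p=\infty$ the bound is trivial since $P_t$ is a contraction on $L^\infty(\mu_k)$ (the heat kernel integrates to $1$ against $\mu_k$, as $P_t 1 = 1$), so $\norm{P_t f}_\infty \leq \norm{f}_\infty$, and the power of $t$ is $0 = -(2\gamma+N)/(2\cdot\infty)$ as required.

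For $1 < p < \infty$, I would interpolate: write $\frac1p = \frac{1-\vartheta}{1} + \frac{\vartheta}{\infty}$ with $\vartheta = 1 - \frac1p$, so the operator norm of $P_t$ from $L^p(\mu_k)$ to $L^\infty(\mu_k)$ is bounded by the product of the $L^1\to L^\infty$ norm to the power $1-\vartheta = \frac1p$ and the $L^\infty\to L^\infty$ norm to the power $\vartheta$; this gives
\[
\norm{P_t f}_\infty \leq \left( C t^{-(2\gamma+N)/2} \right)^{1/p} \cdot 1^{1-1/p} \norm{f}_p = C^{1/p} t^{-(2\gamma+N)/2p} \norm{f}_p,
\]
which is the assertion. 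The only genuinely substantive input is the heat kernel upper bound \eqref{heatkernelbounds}, which is quoted in the preliminaries; everything else is the standard interpolation machinery, so I do not anticipate a serious obstacle. The one point to be careful about is justifying that $P_t f$ is well-defined for $f\in L^p(\mu_k)$ with $p<\infty$ (not just for bounded uniformly continuous $f$ as in the definition given); this follows because the kernel bound shows $h_t(x,\cdot)\in L^{p^*}(\mu_k)$ for every fixed $x$ and $t>0$ (the Gaussian factor makes all the integrals converge), so $P_t$ extends to all of $L^p(\mu_k)$ by Hölder, and the pointwise estimates above remain valid on this extension.
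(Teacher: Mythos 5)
Your argument is correct and follows essentially the same route as the paper: the $p=1$ case from the heat kernel upper bound \eqref{heatkernelbounds}, the $p=\infty$ case from the normalisation $\int h_t(x,y)\,w_k(y)\,\mathrm{d}y=1$, and Riesz--Thorin interpolation for the intermediate exponents. Your additional remark on extending $P_t$ to all of $L^p(\mu_k)$ is a reasonable point of care that the paper passes over silently.
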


\begin{proof}
Firstly, if $f\in L^1(\mu_k)$, then, using the bounds on the Dunkl heat kernel from (\ref{heatkernelbounds}), we deduce that 
$$ |P_tf(x)| \leq \int_{\RR^N} \frac{1}{(2t)^{\gamma+N/2}M_k}|f(y)| w_k(y) \diff y = C t^{-\gamma-N/2} \norm{f}_1,$$
for a constant $C>0$ that does not depend on $t$. Thus the Proposition holds true for $p=1$.

On the other hand, if $f\in L^\infty(\mu_k)$, then, using the fact that $\int_{\RR^N} h_t(x,y) w_k(y) \diff y=1$ for any $x\in\RR^N$, we have
$$ |P_tf(x)| \leq \norm{f}_\infty,$$
so the Proposition holds true in the case $p=\infty$.

The general case then follows by interpolation, using the Riesz-Thorin Theorem (see for example Theorem 1.1.5 in \cite{Davies}).  
\end{proof}

Using this result we are now ready to present the classic Sobolev and Gagliardo-Nirenberg inequalities.

\begin{cor} [Sobolev inequality]
Let $1\leq p < \max\{2,N+2\gamma\}$ and define $q=\frac{p(N+2\gamma)}{N+2\gamma-p}$. Then, for any $f\in C_c^\infty(\RR^N)$, we have the inequality
$$ \norm{f}_q \leq C\norm{\nabla_k f}_p,$$
where $C>0$ is a constant.
\end{cor}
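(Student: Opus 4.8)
The plan is to derive the Sobolev inequality $\norm{f}_q \leq C\norm{\nabla_k f}_p$ from the Besov-space inequality of Theorem \ref{besovineq} by controlling the Besov norm $\norm{f}_{B_{\infty,\infty}^{p/(p-q)}}$ in terms of $\norm{\nabla_k f}_p$ via the ultracontractivity estimate of Proposition \ref{ultracontractivity}. Note first that with $q = \frac{p(N+2\gamma)}{N+2\gamma-p}$ one computes the Besov exponent to be $s := \frac{p}{p-q} = -\frac{N+2\gamma-p}{N+2\gamma}\cdot\frac{q}{p}\cdot\frac{1}{\cdot}$, and a short algebraic simplification shows $-s/2 = \frac{N+2\gamma}{2p} - \frac{1}{2}\cdot\frac{\cdot}{\cdot}$; the key point is that the exponent $-s/2$ appearing in $\norm{f}_{B^s_{\infty,\infty}} = \sup_{t>0} t^{-s/2}\norm{P_t f}_\infty$ should match exactly what one gets by first applying the pseudo-Poincaré bound and then ultracontractivity. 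So the heart of the argument is the estimate
$$ \norm{P_t f}_\infty \leq \norm{P_t f - P_{2t} f}_\infty + \norm{P_{2t} f}_\infty + \cdots,$$
or more directly: write $P_t f = P_{t/2}(P_{t/2} f)$, apply Proposition \ref{ultracontractivity} with exponent $p$ to get $\norm{P_t f}_\infty \leq C t^{-(N+2\gamma)/2p}\norm{P_{t/2}f}_p \leq C t^{-(N+2\gamma)/2p}\norm{f}_p$ by $L^p$-contractivity of the semigroup. This already gives $\norm{f}_{B^{-( N+2\gamma)/p}_{\infty,\infty}} \leq C\norm{f}_p$, but we need the Besov norm with the specific negative exponent $s$ tied to $q$, and we need the right-hand side to involve $\norm{\nabla_k f}_p$, not $\norm{f}_p$.

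The cleaner route, which I would actually carry out, is this. First apply the pseudo-Poincaré inequality (Proposition \ref{pseudopoincare}, valid for $1 \leq p \leq 2$; for $p$ in the range $2 \leq p < N+2\gamma$ one instead falls back on Theorem 1.1 as already cited, so the corollary's stated range $1\leq p<\max\{2,N+2\gamma\}$ is covered): $\norm{f - P_t f}_p \leq C\sqrt{t}\,\norm{\nabla_k f}_p$. Then, using ultracontractivity on $f - P_t f$ — more precisely writing $P_s(f) - P_s(P_t f) = P_s(f - P_t f)$ and applying Proposition \ref{ultracontractivity} — combined with the semigroup property, one estimates $\norm{P_s f}_\infty$ for $s$ comparable to $t$. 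Summing a dyadic decomposition $P_t f = \sum_j (P_{2^{-j}t}f - P_{2^{-j+1}t}f) + \lim$, and using that $\norm{P_a f - P_b f}_\infty \leq C a^{-(N+2\gamma)/2p}\norm{f - P_{b-a}f}_p \leq C a^{-(N+2\gamma)/2p}\sqrt{b-a}\,\norm{\nabla_k f}_p$, one gets a geometric series in $j$ whose sum is $C t^{1/2 - (N+2\gamma)/2p}\norm{\nabla_k f}_p$. Reading off the exponent, this says precisely $\norm{f}_{B^{1 - (N+2\gamma)/p}_{\infty,\infty}} \leq C\norm{\nabla_k f}_p$, and one checks $1 - \frac{N+2\gamma}{p} = \frac{p}{p-q}$ with the given $q$. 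Plugging this into Theorem \ref{besovineq} gives $\norm{f}_q \leq C\norm{\nabla_k f}_p^{p/q}\cdot\left(C\norm{\nabla_k f}_p\right)^{1-p/q} = C\norm{\nabla_k f}_p$, as desired.

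The main obstacle I anticipate is the bookkeeping in the dyadic sum: one must verify that the series $\sum_j (2^{-j}t)^{-(N+2\gamma)/2p}(2^{-j}t)^{1/2}$ converges, which requires the exponent $\frac{1}{2} - \frac{N+2\gamma}{2p}$ to have the correct sign (negative, i.e. $p < N+2\gamma$, precisely the Sobolev range) so the sum is dominated by its largest term, and that the tail $P_{2^{-j}t}f \to f$ in an appropriate sense as $j \to \infty$ does not contribute — this is where one uses that $f \in C_c^\infty$, so $f$ is already controlled and $P_s f \to f$ uniformly as $s \to 0$. A secondary technical point is making sure the exponent arithmetic linking $1 - \frac{N+2\gamma}{p}$, the Besov index $\frac{p}{p-q}$, and the interpolation weights $\theta = p/q$ in Theorem \ref{besovineq} all line up; this is routine but must be done carefully. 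For the range $2 \leq p < N+2\gamma$ (relevant only when $N+2\gamma > 2$), the pseudo-Poincaré inequality is unavailable, so there one simply invokes the already-established result (Theorem 1.1 / \cite{AS}) rather than reproving it.
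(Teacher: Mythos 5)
Your overall strategy is viable but differs from the paper's, and as written it contains a genuine error in the key step. The paper's proof is a two-line argument: apply Proposition \ref{ultracontractivity} with exponent $q$ (not $p$) to get $\norm{P_tf}_\infty \leq Ct^{-(N+2\gamma)/2q}\norm{f}_q$, hence $\norm{f}_{B_{\infty,\infty}^{-(N+2\gamma)/q}} \leq C\norm{f}_q$; since $-(N+2\gamma)/q = p/(p-q)$ is exactly the Besov index in Theorem \ref{besovineq}, one gets $\norm{f}_q \leq C\norm{\nabla_k f}_p^{\theta}\norm{f}_q^{1-\theta}$ with $\theta=p/q$ and absorbs the factor $\norm{f}_q^{1-\theta}$ (finite and nonzero for $f\in C_c^\infty$). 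You instead try to prove the stronger embedding $\norm{f}_{B_{\infty,\infty}^{1-(N+2\gamma)/p}} \leq C\norm{\nabla_k f}_p$; your exponent arithmetic $1-\tfrac{N+2\gamma}{p}=\tfrac{p}{p-q}$ is correct, and such an embedding would indeed finish the proof, but it costs an extra dyadic argument and an extra use of the pseudo-Poincar\'e inequality that the paper's proof does not need.

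The concrete error is in the direction of your telescoping sum. You decompose toward small times, with blocks $P_{2^{-j}t}f - P_{2^{-j+1}t}f$ and terms bounded by $C(2^{-j}t)^{\frac12-\frac{N+2\gamma}{2p}}\norm{\nabla_k f}_p$. In the Sobolev range $p<N+2\gamma$ the exponent $\tfrac12-\tfrac{N+2\gamma}{2p}$ is negative, so as $2^{-j}t\to 0$ these terms blow up and the series \emph{diverges}; your stated convergence criterion (``negative exponent, so the sum is dominated by its largest term'') is backwards for a sequence of times tending to zero. Moreover the anchor at the small-time end is $f$ itself, whose $L^\infty$ norm cannot be controlled by $\norm{\nabla_k f}_p$ with the right scaling. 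The fix is to telescope toward large times: write $P_tf = -\sum_{j\geq 0}\bigl(P_{2^{j+1}t}f - P_{2^jt}f\bigr)$, using that $\norm{P_{2^Jt}f}_\infty \leq C(2^Jt)^{-(N+2\gamma)/2}\norm{f}_1 \to 0$ as $J\to\infty$ for $f\in C_c^\infty$. Then $\norm{P_{2^{j+1}t}f-P_{2^jt}f}_\infty = \norm{P_{2^jt}(P_{2^jt}f-f)}_\infty \leq C(2^jt)^{-(N+2\gamma)/2p}(2^jt)^{1/2}\norm{\nabla_k f}_p$, and now the negativity of $\tfrac12-\tfrac{N+2\gamma}{2p}$ makes the geometric series converge and be dominated by its $j=0$ term, giving $\norm{P_tf}_\infty\leq Ct^{\frac12-\frac{N+2\gamma}{2p}}\norm{\nabla_k f}_p$ as desired. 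With that correction (and your stated fallback to the result of \cite{AS} for $2<p<N+2\gamma$, since the pseudo-Poincar\'e inequality is only available for $1\leq p\leq 2$), your argument goes through; but you should be aware that the paper avoids all of this by measuring the Besov norm against $\norm{f}_q$ rather than against $\norm{\nabla_k f}_p$.
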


\begin{proof}
Let $\theta = \frac{p}{q} = \frac{N+2\gamma-p}{N+2\gamma}$. By Proposition \ref{ultracontractivity} we have
$$ \norm{P_tf}_\infty \leq C t^{-(2\gamma+N)/2q} \norm{f}_q,$$
so, by the definition of the Besov norm, we deduce that
$$ \norm{f}_{B_{\infty,\infty}^{-(2\gamma+N)/q}} \leq \norm{f}_q.$$
We note that $\frac{2\gamma+N}{q}=-\frac{N+2\gamma-p}{p}=-\frac{\theta}{\theta-1}$ so Theorem \ref{besovineq} gives
$$ \norm{f}_q \leq C \norm{\nabla_k f}_p^\theta \norm{f}_q^{1-\theta},$$
from which the Sobolev inequality follows immediately.
\end{proof}

\begin{cor} [Gagliardo-Nirenberg inequality]
Let $1\leq p <q <\infty$ such that also $p\leq 2$, and define $\frac{r}{q(N+2\gamma)} = \frac{1}{p} - \frac{1}{q}$. Then, for any $f\in C_c^\infty(\RR^N)$, we have the inequality
$$ \norm{f}_q \leq C\norm{\nabla_k f}_p^{p/q} \norm{f}_r^{1-p/q}.$$
\end{cor}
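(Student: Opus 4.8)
The plan is to derive the Gagliardo--Nirenberg inequality directly from the Besov-space inequality of Theorem \ref{besovineq}, controlling the negative-order Besov norm that appears there by an $L^r$ norm via the ultracontractivity estimate of Proposition \ref{ultracontractivity}. Set $\theta = p/q \in (0,1)$, exactly as in the proof of the preceding Sobolev corollary; then the Besov exponent in Theorem \ref{besovineq} is $\frac{p}{p-q} = \frac{\theta}{\theta-1}$, so that theorem reads
$$ \norm{f}_q \leq C \norm{\nabla_k f}_p^{p/q} \norm{f}_{B_{\infty,\infty}^{\theta/(\theta-1)}}^{1-p/q}. $$
It therefore suffices to prove that $\norm{f}_{B_{\infty,\infty}^{\theta/(\theta-1)}} \leq C\norm{f}_r$ with $r$ as in the statement.

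First I would record the special case of Proposition \ref{ultracontractivity} for the index $r$: $\norm{P_t f}_\infty \leq C t^{-(2\gamma+N)/(2r)} \norm{f}_r$ for all $t>0$. Multiplying by $t^{(2\gamma+N)/(2r)}$ and taking the supremum over $t>0$, the definition of the Dunkl--Besov norm yields
$$ \norm{f}_{B_{\infty,\infty}^{-(2\gamma+N)/r}} \leq C\norm{f}_r. $$
Next I would check the exponent bookkeeping: from $\frac{r}{q(N+2\gamma)} = \frac1p - \frac1q = \frac{q-p}{pq}$ one gets $r = \frac{(N+2\gamma)(q-p)}{p}$, hence $-\frac{2\gamma+N}{r} = -\frac{p}{q-p} = \frac{p}{p-q} = \frac{\theta}{\theta-1}$. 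Thus the Besov space in the last display is precisely $B_{\infty,\infty}^{\theta/(\theta-1)}$, and combining it with Theorem \ref{besovineq} gives
$$ \norm{f}_q \leq C \norm{\nabla_k f}_p^{p/q} \norm{f}_r^{1-p/q}, $$
as claimed. Since $f \in C_c^\infty(\RR^N)$ and $w_k$ is locally integrable and homogeneous, every norm above is finite, so no further justification is needed.

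I do not expect a genuine obstacle: the argument is essentially a one-line combination of two already-established results together with elementary exponent arithmetic, mirroring the proof of the Sobolev corollary but retaining a general integrability index $r$ on the lower-order factor instead of specialising to $r=q$. The only point deserving a moment's attention is that the hypothesis $p\leq 2$ --- inherited through Theorem \ref{besovineq} from the pseudo-Poincar\'e inequality of Proposition \ref{pseudopoincare} --- is exactly what the statement assumes, so all invoked results apply on the nose.
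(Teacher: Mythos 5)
Your argument is correct and is exactly the paper's proof: the paper's own justification is the one-line remark that the corollary ``follows from Theorem \ref{besovineq} as above since $\theta=\frac{p}{q}$ and $\frac{\theta}{\theta-1}=-\frac{N+2\gamma}{r}$,'' i.e.\ precisely your combination of the Besov inequality with the ultracontractivity bound $\norm{f}_{B_{\infty,\infty}^{-(2\gamma+N)/r}}\leq C\norm{f}_r$ and the exponent check $r=\frac{(N+2\gamma)(q-p)}{p}$. The only caveat, which you share with the paper, is that Proposition \ref{ultracontractivity} is stated for indices at least $1$, while the hypotheses here do not force $r\geq 1$ when $q$ is close to $p$.
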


\begin{proof}
This follows from Theorem \ref{besovineq} as above since $\theta=\frac{p}{q}$ and $\frac{\theta}{\theta-1} = -\frac{N+2\gamma}{r}$.
\end{proof}

\section{Estimates of the best constant}

In this section we provide estimates for the best constant in the Dunkl Sobolev inequality, i.e., we study 
$$ C_{DS} := \sup \frac{\norm{f}_q}{\norm{\nabla_k f}_2},$$
where the supremum is considered over all non-zero functions in $C_c^1(\RR^N)$. From Lemma \ref{dunkl-euclidean-dirichletform} we see that 
$$ \frac{\norm{f}_q}{\norm{\nabla_k f}_2} \leq \frac{\norm{f}_q}{\norm{\nabla f}_2},$$
so we can obtain estimates by comparison to the optimal constant in the Sobolev inequality in the weighted space $L^2(\mu_k)$ but with usual gradient $\nabla$. More precisely, consider the inequality
$$ \norm{f}_q \leq C_{CS} \norm{\nabla_k f}_2,$$
with optimal constant, i.e.,
$$ C_{CS} := \sup \frac{\norm{f}_q}{\norm{\nabla f}_2}.$$
From the observation above we see straight away that
$$ C_{DS} \leq C_{CS}.$$

In what follows we will find the precise value of $C_{CS}$ by first proving an isoperimetric inequality for the weighted measure on Weyl chambers, and then obtaining a rearrangement inequality using a general result of Talenti \cite{T2} for weighted measures. This will provide the upper estimate of $C_{DS}$. For the lower estimate, we compute the supremum in the definition of $C_{DS}$ but over all radial functions. We end the section with a conjecture, that the supremum is actually achieved for a radial function.

The main result is the following:

\begin{thm} \label{sobolevconstantsthm}
With the same notation as above, we have
$$ |G|^{-\frac{1}{N+2\gamma}} C_{CS} \leq C_{DS} \leq C_{CS}.$$
More precisely, we have
\begin{align*}
&\left(\frac{2}{(N+2\gamma)(N+2\gamma-2)} \right)^{1/2} 
\cdot \left[ \frac{1}{M_k}\frac{\Gamma(N+2\gamma)}{\Gamma((N+2\gamma)/2)}\right]^{\frac{1}{N+2\gamma}} 
\\
&\qquad \qquad \qquad
\leq C_{DS}
\leq \left(\frac{2}{(N+2\gamma)(N+2\gamma-2)} \right)^{1/2} 
\cdot \left[ \frac{|G|}{M_k}\frac{\Gamma(N+2\gamma)}{\Gamma((N+2\gamma)/2)}\right]^{\frac{1}{N+2\gamma}}.
\end{align*}
\end{thm}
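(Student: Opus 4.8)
The plan is to establish the sharper statement (which immediately gives the two-sided inequality in the first display, since $C_{DS}\le C_{CS}$ was already observed, and the lower bound $|G|^{-1/(N+2\gamma)}C_{CS}\le C_{DS}$ will drop out once we know the explicit value of $C_{CS}$). The strategy has three stages: (i) prove an isoperimetric inequality for $\mu_k$ on a single Weyl chamber; (ii) invoke Talenti's weighted Pólya–Szegő/rearrangement result to reduce the weighted Sobolev quotient $\norm{f}_q/\norm{\nabla f}_2$ to a one-dimensional variational problem over radial functions on $[0,\infty)$, and solve it explicitly to obtain the value of $C_{CS}$; (iii) convert the bounds on $C_{CS}$ into bounds on $C_{DS}$ via Lemma \ref{dunkl-euclidean-dirichletform} for the upper bound, and via a direct test-function computation with radial functions for the lower bound.

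For stage (i): the measure $\mu_k$ restricted to a Weyl chamber $C$ has density $w_k$, a positive smooth function on $C$ that is homogeneous of degree $2\gamma$; on $\RR^N$ the total mass of the unit ball scales as $R^{N+2\gamma}$, as already computed in the Nash inequality section with constant $p(B_1)/(N+2\gamma)$, and $p(B_1)=|G|\cdot p(C\cap\mathbb{S}_{N-1})$ by the simple transitivity of $G$ on chambers. The natural guess is that among sets of given $\mu_k$-measure inside one chamber, the ones minimising the weighted perimeter are the spherical sectors $C\cap B_R$; this should follow from Talenti's general weighted isoperimetric framework in \cite{T2}, since $w_k$ is a power of a concave-type weight adapted to the cone $C$ — more precisely one checks that the relevant "isoperimetric profile'' of $(C,\mu_k)$ equals that of the model $R\mapsto \mu_k(C\cap B_R)$. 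The key quantity to extract is the constant relating the $\mu_k$-perimeter of $C\cap\partial B_R$ to $\mu_k(C\cap B_R)^{(N+2\gamma-1)/(N+2\gamma)}$; this is a clean computation using the homogeneity degree $2\gamma+N-1$ of $w_k$ on the sphere.

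For stage (ii): with the isoperimetric inequality in hand, Talenti's weighted rearrangement result gives $\norm{\nabla f^\#}_{2}\le \norm{\nabla f}_2$ where $f^\#$ is the weighted decreasing rearrangement (on a chamber, or on $\RR^N$ via the $|G|$-fold symmetrisation), while $\norm{f^\#}_q=\norm{f}_q$; hence it suffices to maximise over radial $f$. For radial $f(x)=\phi(|x|)$ one has $\norm{\nabla f}_2^2 = p(B_1)\int_0^\infty \phi'(r)^2 r^{N+2\gamma-1}\diff r$ and $\norm{f}_q^q = p(B_1)\int_0^\infty |\phi(r)|^q r^{N+2\gamma-1}\diff r$, which is exactly the classical radial Sobolev problem in "dimension'' $n:=N+2\gamma$. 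Talenti's explicit solution then gives, with $q=\frac{2n}{n-2}$, the extremal $\phi(r)=(1+r^2)^{-(n-2)/2}$ up to scaling, and the sharp constant
$$
\left(\frac{2}{n(n-2)}\right)^{1/2}\left[\frac{p(B_1)}{|G|}\cdot\frac{1}{M_k}\cdot\text{(beta-integral factor)}\right]^{1/n},
$$
which, after matching $p(B_1)=|G|\,p(C\cap\mathbb S_{N-1})$ and evaluating $p(C\cap\mathbb S_{N-1})$ against $M_k$ via the Gaussian integral $M_k=\int_{\mathbb S_{N-1}}w_k\,\diff\sigma\cdot\int_0^\infty e^{-r^2/2}r^{n-1}\diff r = p(B_1)\,2^{n/2-1}\Gamma(n/2)$, collapses to $\big[\tfrac{1}{M_k}\tfrac{\Gamma(n)}{\Gamma(n/2)}\big]^{1/n}$ times the leading factor — i.e. the upper-bound expression in the theorem, with $C_{DS}\le C_{CS}$ equal to that value.

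For stage (iii): the upper bound $C_{DS}\le C_{CS}$ is immediate from Lemma \ref{dunkl-euclidean-dirichletform}. For the lower bound, I would plug the radial extremiser $\phi(r)=(1+r^2)^{-(n-2)/2}$ back into the \emph{Dunkl} quotient: for a radial function, $\nabla_k f=\nabla f$ pointwise off the reflection hyperplanes (since $f(x)-f(\sigma_\alpha x)=0$ for radial $f$ would be false in general — wait, radial $f$ \emph{is} $G$-invariant, so indeed $f(\sigma_\alpha x)=f(x)$ and all the difference terms vanish), hence $\norm{\nabla_k f}_2=\norm{\nabla f}_2$ and the Dunkl quotient at this test function equals $C_{CS}$ restricted to $\RR^N$ — but note the norms are over all of $\RR^N$, so this gives $C_{DS}\ge \norm{f}_q/\norm{\nabla f}_2$ evaluated with the full-space weight, which is exactly $C_{CS}$ \emph{on $\RR^N$}; the factor $|G|^{-1/(n)}$ discrepancy between the $\RR^N$-extremal value and the chamber-extremal value $C_{CS}$ arises precisely because symmetrisation onto one chamber concentrates the mass, and the ratio of the two model volumes is $|G|$. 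Thus $C_{DS}\ge |G|^{-1/(N+2\gamma)}C_{CS}$, giving the lower bound in the theorem.

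The main obstacle I anticipate is stage (i)–(ii): verifying that Talenti's weighted results in \cite{T2} apply verbatim to the pair $(C,w_k\,dx)$ — one must check the precise hypotheses on the weight (essentially that $w_k$ is a "good'' weight in the sense that the model isoperimetric sets are the spherical sectors and that the rearrangement comparison $\norm{\nabla f^\#}_2\le\norm{\nabla f}_2$ holds on the chamber with mixed boundary conditions on $\partial C$). The homogeneity of $w_k$ and the fact that it vanishes only on $\partial C$ make this plausible, but the boundary behaviour of $w_k$ (it degenerates like $\operatorname{dist}(x,H_\alpha)^{2k_\alpha}$ on each wall) needs care; the Neumann-type condition on the walls is the natural one since test functions need not vanish there, and one must confirm Talenti's theorem covers this case.
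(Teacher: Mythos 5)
Your overall architecture coincides with the paper's: a weighted isoperimetric inequality on a Weyl chamber, Talenti's rearrangement theorem to reduce the weighted quotient with $\nabla$ to radial functions, explicit evaluation of the one-dimensional functional in ``dimension'' $N+2\gamma$ to identify $C_{CS}$, the Dirichlet-form comparison $\norm{\nabla f}_2\leq\norm{\nabla_k f}_2$ (Lemma \ref{dunkl-euclidean-dirichletform}) for the upper bound, and the observation that radial functions are $G$-invariant, so all difference terms vanish and $\nabla_k f=\nabla f$, for the lower bound. The factor $|G|^{-1/(N+2\gamma)}$ arises exactly as you describe, from $p(B_1^\epsilon)=p(B_1)/|G|$ raised to the power $\frac{1}{q}-\frac{1}{2}=-\frac{1}{N+2\gamma}$.

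The genuine gap is the one you flag yourself, and it is not merely a matter of ``checking hypotheses'': Talenti's theorem in \cite{T2} does not produce the isoperimetric inequality, it \emph{assumes} a weighted isoperimetric inequality and derives the P\'olya--Szeg\H{o}-type comparison from it. So ``this should follow from Talenti's general weighted isoperimetric framework'' is circular, and a self-contained proof of Theorem \ref{iso} is required before stage (ii) can start. The paper supplies one by an ABP-type argument in the style of Cabr\'e: solve the Neumann problem $w_k^{-1}\nabla\cdot(w_k\nabla u)=p(\Omega)/\mu_k(\Omega)$ in $\Omega$ with $\partial u/\partial\nu=1$ on $\partial\Omega$, show that $B_1^\epsilon$ is contained in the gradient image of the lower contact set $\Gamma_u^\epsilon$, and apply the weighted arithmetic--geometric mean inequality to the Jacobian $w_k(\nabla u)\,|\det H(u)|$; sharpness for $B_1^\epsilon$ comes from the explicit solution $u=|x|^2/2$, and a separate superadditivity argument reduces a general Lipschitz $\Omega$ to its intersections with single chambers. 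Without some such argument your chain of implications does not close. A minor arithmetic slip in stage (ii): the chamber constant $C_W=C_{CS}$ carries the factor $|G|/M_k$, not $1/M_k$; it is the full-space radial value (the lower bound) that carries $1/M_k$. Your stage (iii) shows you have the structure right, but the bracketed expression you identify with the ``upper-bound expression in the theorem'' is actually the lower one.
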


\subsection{Isoperimetric inequality}

For a measurable set $A\subset \RR^N$ we define
$$ p(A) := \int_{\partial A} w_k(x) \diff\sigma(x),$$
where $\sigma$ is the surface measure on $\partial A$. The aim of this subsection is to prove the following isoperimetric inequality.

\begin{thm}[Isoperimetric inequality] \label{iso}
Let $\Omega \subset \RR^N$ be a bounded Lipschitz domain. Then we have the inequality
$$ \mu_k(\Omega)^{1-\frac{1}{N+2\gamma}} \leq C p(\Omega),$$
for a constant $C=\frac{\mu_k(B_1^\epsilon)^{1-\frac{1}{N+2\gamma}}}{p(B_1^\epsilon)}$, where $\epsilon\in E$ is such that $\RR^N_\epsilon$ is a Weyl chamber, $B_1=\{|x|<1\}$ and $B_1^\epsilon=B_1\cap \RR^N_\epsilon$.
\end{thm}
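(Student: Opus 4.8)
The plan is to reduce the weighted isoperimetric inequality to the classical Euclidean isoperimetric inequality by exploiting the structure of the Weyl chambers together with the $G$-invariance of the weight $w_k$. The key observation is that $w_k$ is $G$-invariant and homogeneous of degree $2\gamma$, and that $\RR^N$ decomposes, up to a set of measure zero, into the $|G|$ translates $g(\RR^N_\epsilon)$ of a single Weyl chamber. So it suffices to prove the inequality on one Weyl chamber and then patch. More precisely, I would first reduce to the case where $\Omega$ is contained in the closure of a single Weyl chamber $\overline{\RR^N_\epsilon}$: given a general bounded Lipschitz domain $\Omega$, set $\Omega_g = \Omega \cap g(\RR^N_\epsilon)$ for each $g \in G$; then $\mu_k(\Omega) = \sum_g \mu_k(\Omega_g)$, while $p(\Omega) \geq \sum_g p(\Omega_g \cap g(\RR^N_\epsilon)^{\circ})$ after accounting for the pieces of $\partial\Omega_g$ that lie on the chamber walls — and on the walls $w_k$ vanishes, so those contribute nothing to $p$. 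Using $G$-invariance of $w_k$, each $\mu_k(\Omega_g)$ and each boundary integral equals the corresponding quantity for $g^{-1}\Omega_g \subset \overline{\RR^N_\epsilon}$, and then the superadditivity $\big(\sum a_i\big)^{1-1/(N+2\gamma)} \leq \sum a_i^{1-1/(N+2\gamma)}$ (valid since the exponent is in $(0,1)$) finishes the reduction.

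The second and main step is the inequality on a single Weyl chamber. Here I would invoke the general weighted isoperimetric result of Talenti \cite{T2} (or rather, the rearrangement/P\'olya–Szeg\H{o} machinery referenced in the introduction), but the cleanest self-contained route is to identify the extremizer directly: among subsets of $\RR^N_\epsilon$ of given $\mu_k$-measure, the "ball slice" $B_r^\epsilon = B_r \cap \RR^N_\epsilon$ minimizes the perimeter $p$. Because $w_k$ is a homogeneous weight that is itself (up to normalization) of the form arising in the theory of weighted isoperimetric inequalities with monomial-type weights on cones — the chamber $\RR^N_\epsilon$ is a convex cone and $w_k$ restricted to it is a product of powers of linear functionals that are positive on the cone — one can apply the known sharp weighted isoperimetric inequality on convex cones (the relevant scaling being: $\mu_k$ scales like $R^{N+2\gamma}$ and $p$ like $R^{N+2\gamma-1}$ under dilation $x \mapsto Rx$, which is exactly what forces the exponent $1 - \frac{1}{N+2\gamma}$). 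The constant is then computed by plugging $\Omega = B_1^\epsilon$ into both sides, which by definition makes the inequality an equality and yields $C = \mu_k(B_1^\epsilon)^{1-1/(N+2\gamma)}/p(B_1^\epsilon)$.

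Finally I would record the elementary scaling check confirming the exponent and constant are consistent: replacing $\Omega$ by $R\Omega$ multiplies the left side by $R^{(N+2\gamma)(1-1/(N+2\gamma))} = R^{N+2\gamma-1}$ and the right side by $R^{N+2\gamma-1}$, so the inequality is dilation-invariant, as it must be, and testing on balls fixes the constant.

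\textbf{Main obstacle.} The hard part is the sharp weighted isoperimetric inequality on a single Weyl chamber — i.e., showing that ball slices $B_r^\epsilon$ are the true minimizers of $p$ at fixed $\mu_k$-measure among all Lipschitz subsets of the cone $\RR^N_\epsilon$. Unlike the unweighted case, symmetrization with respect to a weight is delicate; the cleanest argument cites Talenti's weighted rearrangement framework \cite{T2}, but one must verify its hypotheses apply to the weight $w_k\restbig|_{\RR^N_\epsilon}$ (in particular that the relevant isoperimetric profile of $\mu_k$ on the chamber is attained on ball slices, which is where the convexity of the cone and the concavity properties of the weight's logarithm enter). The patching step and constant computation are routine by comparison.
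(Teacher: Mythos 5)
Your reduction to a single Weyl chamber is sound and in fact slightly cleaner than the paper's: the paper also splits $\Omega$ into the pieces $\Omega_H = \Omega \cap H$ over Weyl chambers $H$, uses $\mu_k(\Omega)=\sum_H \mu_k(\Omega_H)$ and $p(\Omega)=\sum_H p(\Omega_H)$ (the chamber walls carry no weighted perimeter since $w_k$ vanishes there), but then argues by contradiction that the ratio for $\Omega$ dominates the minimum of the ratios for the $\Omega_H$; your subadditivity $\bigl(\sum a_i\bigr)^{1-1/(N+2\gamma)}\leq \sum a_i^{1-1/(N+2\gamma)}$ does the same job in one line. The dilation-invariance check and the identification of the constant by testing on $B_1^\epsilon$ are also correct.

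The gap is exactly where you flag it: the single-chamber inequality is the whole content of the theorem, and you do not prove it --- you defer to ``the known sharp weighted isoperimetric inequality on convex cones'' and to Talenti \cite{T2}. The citation of \cite{T2} is the wrong way around: in this paper Talenti's result is used \emph{after} Theorem \ref{iso} to deduce the weighted rearrangement (P\'olya--Szeg\H{o}) inequality \emph{from} the isoperimetric inequality, so invoking it here would be circular. What you presumably have in mind is the Cabr\'e--Ros-Oton--Serra theorem for homogeneous weights on convex cones, and its concavity hypothesis can indeed be checked here ($w_k^{1/(2\gamma)}$ restricted to a chamber is a weighted geometric mean of linear forms nonnegative on that cone, hence concave), but that verification is absent from your write-up, and the paper does not take this route: it proves the chamber inequality from scratch by an ABP-type argument. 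Concretely, it solves the weighted Neumann problem $w_k^{-1}\nabla\cdot(w_k\nabla u)=p(\Omega)/\mu_k(\Omega)$ in $\Omega$ with $\partial u/\partial\nu=1$ on $\partial\Omega$, shows that $B_1^\epsilon$ is contained in the image under $\nabla u$ of the lower contact set $\Gamma_u^\epsilon$ (restricted to points where $\nabla u$ lies in the chamber $\RR^N_\epsilon$), and then combines the area formula with the weighted arithmetic--geometric mean inequality applied to the $N$ Hessian eigenvalues together with the $2\gamma$ weight terms $2k_\alpha\langle\alpha,\nabla u\rangle/\langle\alpha,x\rangle$; the explicit solution $u=|x|^2/2$ on $B_1^\epsilon$ then identifies the sharp constant via $p(B_1^\epsilon)/\mu_k(B_1^\epsilon)=N+2\gamma$. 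Without this argument, or a precise citation with hypotheses verified, your proposal establishes only the reduction, not the theorem.
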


\begin{proof}

First assume that $\Omega$ is contained in a Weyl chamber $\RR^N_\epsilon$ for some $\epsilon\in E$. Moreover, since $\Omega$ is a Lipschitz domain, it can be approximated by smooth domains that also approximate the perimeter and area (see, for example, \cite{D}), so we can assume further that $\overline{\Omega} \subset \RR^N_\epsilon$. We start by considering the Neumann problem 
\begin{equation} \label{neumannproblem}
\begin{cases}
w_k^{-1} \nabla \cdot (w_k \nabla u) = c & \text{ in } \Omega
\\
\frac{\partial u}{\partial \nu} = 1 & \text{ on } \partial\Omega,
\end{cases}
\end{equation}
where $\nu$ is outward normal to the boundary of $\Omega$, and $c$ is a constant. If the equation has a solution $u\in H^1(\Omega)$, then, by the divergence theorem, we must have
$$ \int_\Omega \nabla \cdot (w_k \nabla u) \diff x = \int_{\partial\Omega} w_k \frac{\partial u}{\partial \nu} \diff\sigma(x),$$
so $c=\frac{p(\Omega)}{\mu_k(\Omega)}$. Conversely, if $c=\frac{p(\Omega)}{\mu_k(\Omega)}$, then the general theory of elliptic equations tells us that the problem has a unique solution $u\in H^1(\Omega)$ (up to a constant). Moreover, by the assumption on $\Omega$ at the beginning of the proof, the operator
\begin{equation} \label{uniformllyellipticoperator}
 w_k^{-1} \nabla \cdot (w_k \nabla u) 
= \Delta u + \Suma \frac{2k_\alpha}{\alx} \langle \alpha, \nabla u\rangle 
\end{equation}
is uniformly elliptic on $\Omega$. Thus, by the regularity theory, the solution $u$ is smooth. 

Consider the set 
$$ \Gamma_u:=\{ x\in\Omega : u(y)-u(x) \geq \nabla u(x) \cdot (y-x) \text{ for all } y\in \overline{\Omega} \}.$$
Geometrically, this is the set of all points $x$ for which the graph of $u$ lies entirely above the tangent hyperplane at $x$. Consider also the set
$$ \Gamma^\epsilon_u := \left\{ x\in\Gamma_u : \text{sgn}\left(\langle \alpha, \nabla u(x) \rangle\right) = \epsilon(\alpha) \text{ for all } \alpha\in R_+ \right\}.$$
We will show that $B_1^\epsilon \subset \nabla u(\Gamma_u^\epsilon)$. Indeed, take $\zeta\in B_1^\epsilon$, and consider the point $x\in \overline{\Omega}$ for which
$$ \min_{y\in\overline{\Omega}} \left( u(y) - \zeta \cdot y \right)=u(x)-\zeta \cdot x.$$
If $x\in\partial\Omega$, then 
$$ \frac{\partial u(x)}{\partial \nu} 
= \lim_{h\downarrow 0} \frac{u(x-h\nu)-u(x)}{-h} \leq \zeta \cdot \nu \leq |\zeta| <1,$$
which contradicts the boundary condition. So $x\in \Omega$ and so, by definition, $x\in\Gamma_u$. Also, being a minimum of the function $u(y)-\zeta\cdot y$, by taking gradient we have
$$ \nabla u(x)=\zeta.$$
Since $\zeta \in B_1^\epsilon$, this shows that $x\in \Gamma_u^\epsilon$, and consequently $B_1^\epsilon \subset \nabla u(\Gamma_u^\epsilon)$. 

This implies that
$$ \mu_k(B_1^\epsilon) 
\leq \int_{\nabla u(\Gamma_u^\epsilon)} w_k(x) \diff x.$$
Using the change of variables $y=(\nabla u)^{-1} (x)$ (notice that by definition $\nabla u$ is injective on $\Gamma_u$), we have
$$ \int_{\nabla u(\Gamma_u^\epsilon)} w_k(x) \diff x
= \int_{\Gamma_u^\epsilon} w_k(\nabla u(y)) |\text{det}(H(u)(y)| \diff y.$$
Here $H(u)$ is the Hessian matrix of $u$, i.e., 
$$ H(u) = \left(\frac{\partial^2 u}{\partial x_i \partial x_j}\right)_{1\leq i,j \leq N}.$$
Denote by $\lambda_1(y), \ldots, \lambda_N(x)$ the eigenvalues of $H(u)(y)$. Then the Jacobian factor can also be written as
$$ |\text{det}(H(u)(y))| = \lambda_1(y) \cdots \lambda_N(y).$$

Before moving to the next step, we note that the Hessian matrix is positive-semidefinite on $\Gamma_u$. Indeed, assume that for some $x\in \Gamma_u$ there exists a vector $v \in \RR^N$ such that
$$ v^T \cdot H(u)(x) \cdot v <0.$$
By continuity of $H(u)$, there exists $r>0$ such that the same holds over a small ball $B_{2r}(x) \subset \Omega$, i.e.,
\begin{equation} \label{possemidef}
v^T \cdot H(u)(y) \cdot v <0 \qquad \text{ for all } y\in B_{2r}(x).
\end{equation}
By the mean value theorem, there exists $\theta \in (0,1)$ such that
$$ u(x+rv)-u(x) = r\nabla u(x) \cdot v + r^2 v^T \cdot H(u)(x+\theta r v) \cdot v.$$
By (\ref{possemidef}) we have
$$ u(x+rv)-u(x) < r \nabla u(x) \cdot v,$$
contradicting the fact that $x\in \Gamma_u$. Therefore $H(u)(x)$ is positive-semidefinite and so $\lambda_1(x), \ldots, \lambda_N(x) \geq 0$. 

Applying the weighted mean value inequality in the above, we have
\begin{align*}
\mu_k(B_1^\epsilon) 
& \leq \int_{\Gamma_u^\epsilon} w_k(\nabla u(y)) |\text{det}(H(u)(y)| \diff y
\\
& = \int_{\Gamma_u^\epsilon} \prod_{\alpha\in R_+} \left( \frac{\langle \alpha, \nabla u(y) \rangle}{\alx} \right)^{2k_\alpha} \cdot \lambda_1(y) \cdots \lambda_N (y) w_k(y) \diff y
\\
& \leq \int_{\Gamma_u^\epsilon} \frac{1}{(N+2\gamma)^{N+2\gamma}} \left[ \Suma 2k_\alpha \frac{\langle \alpha, \nabla u(y) \rangle}{\alx} + \lambda_1(y) + \cdots + \lambda_N(y) \right]^{N+2\gamma}  \diff \mu_k(y).
\end{align*}
We note that 
$$ \lambda_1(y) + \cdots + \lambda_N(y) = \text{Tr}(H(u)(y))
= \Delta u(y),$$
so using (\ref{uniformllyellipticoperator}) we have obtained
\begin{equation} \label{isofinalstep}
\begin{aligned}
\mu_k(B_1^\epsilon) 
& \leq \int_{\Gamma_u^\epsilon} \frac{1}{(N+2\gamma)^{N+2\gamma}} \left[w_k(y)^{-1} \nabla \cdot (w_k \nabla u)(y) \right]^{N+2\gamma} \diff \mu_k(y)
\\
& \leq  \left( \frac{1}{N+2\gamma} \cdot \frac{p(\Omega)}{\mu_k(\Omega)^{1-\frac{1}{N+2\gamma}}} \right)^{N+2\gamma}.
\end{aligned}
\end{equation} 

To conclude the proof, we note that the function $u(x)=\frac{1}{2}|x|^2$ solves the Neumann problem (\ref{neumannproblem}) in $B_1^\epsilon$ with constant $c=N+2\gamma$. Due to our observation that the Neumann problem has a solution if and only if $c=\frac{p(\Omega)}{w_k(\Omega)}$, we have that
\begin{equation} \label{isoconstant} \frac{p(B_1^\epsilon)}{\mu_k(B_1^\epsilon)} = N+2\gamma.
\end{equation}
Thus, (\ref{isofinalstep}) implies that
$$ \frac{p(B_1^\epsilon)}{\mu_k(B_1^\epsilon)^{1-\frac{1}{N+2\gamma}}} 
\leq \frac{p(\Omega)}{\mu_k(\Omega)^{1-\frac{1}{N+2\gamma}}},$$
which is what we wanted to prove.

Before we consider the case of general domain $\Omega$, we make some observations about the constant that appears in this inequality. The reflection group associated to the root system $R$ acts transitively on the set of Weyl chambers, i.e., for any $H,H'$ Weyl chambers, there exists $g\in G$ such that $gH=H'$. With a change of variables $gx=y$, this shows that
$$ p(H\cap B_1)=p(H' \cap B_1).$$
This, together with equation (\ref{isoconstant}), shows that the constant 
$$ \frac{p(H \cap B_1)}{\mu_k(H \cap B_1)^{1-\frac{1}{N+2\gamma}}}$$
does not depend on the choice of the Weyl chamber $H$.

For general $\Omega$, let $\Omega_H=\Omega\cap H$ for each Weyl chamber $H$. The subscript $H$ in the sums below will mean that the sum is taken over all Weyl chambers $H$. Since the sets $\Omega_H$ intersect only along the boundary, and $\overline{\Omega}=\displaystyle\bigcup_{H} \overline{\Omega_H}$, it is clear that  
$$ \mu_k(\Omega) = \sum_H \mu_k(\Omega_H).$$
Moreover, because $w_k(x)=0$ for any $x\in \partial\Omega_H \setminus \partial\Omega$, then we also have
$$ p(\Omega) = \sum_H p(\Omega_H).$$
Consequently, we have
\begin{equation} \label{isogencase}
\frac{p(\Omega)}{\mu_k(\Omega)^{1-\frac{1}{N+2\gamma}}} 
\geq \min_H \frac{p(\Omega_H)}{\mu_k(\Omega_H)^{1-\frac{1}{N+2\gamma}}}. 
\end{equation}
Indeed, if this were not true, then for any Weyl chamber $H'$ for which $p(\Omega_{H'})\neq 0$ and $\mu_k(\Omega_{H'})\neq 0$,  we have
$$ \frac{\displaystyle\sum_H p(\Omega_H)}{p(\Omega_{H'})} < \left( \frac{\displaystyle\sum_H \mu_k(\Omega_H)}{\mu_k(\Omega_{H'})} \right)^{1-\frac{1}{N+2\gamma}}
\leq \frac{\displaystyle\sum_H \mu_k(\Omega_H)}{\mu_k(\Omega_{H'})}.$$
By taking the power $-1$ and summing over all the Weyl chambers $H'$, we obtain a contradiction. Therefore, it is enough to prove the inequality for $\Omega_H$ where $H$ realises the minimum of the expression in (\ref{isogencase}), which was done in the first part of the proof.
\end{proof}

In the next step we will obtain a rearrangement inequality as a consequence of the isoperimetric inequality discussed above. Before this, we need to introduce some defintions.

Fix a Weyl chamber $\RR^N_\epsilon$. For any measurable subset $\Omega \subset \overline{\RR^N_\epsilon}$, we define its rearrangement to be the set
$$ \Omega^* = B_r(0) \cap \RR^N_\epsilon,$$
where $B_r(0)$ is the ball of radius $r$ centred at the origin, and $r\geq 0$ is such that 
$$ \mu_k(\Omega^*) = \mu_k(\Omega).$$

For any measurable function $f:\overline{\RR^N_\epsilon} \to \CC$, we define its symmetric decreasing rearrangement $f^* : \overline{\RR^N_\epsilon} \to [0,\infty)$ by
$$ f^*(x) = \int_0^\infty \1_{\{ |f| > t\}^*}(x) \diff t.$$
The function $f^*$ is radial and decreasing, and it satisfies the property
$$ \mu_k(\{x \in \RR^N_\epsilon : |f(x)| >t \}) = \mu_k(\{x \in \RR^N_\epsilon : f^*(x)>t\}),$$
for every $t>0$. As a consequence, we have
$$ \int_{\RR^N_\epsilon} |f|^p \diff \mu_k = \int_{\RR^N_\epsilon} |f^*|^p \diff \mu_k,$$
for any $p\geq 1$. 

The main result of Talenti \cite{T2} then implies the following rearrangement inequality.

\begin{prop} \label{euclideanrearrangement}
Let $1\leq p <\infty$, and let $H=\RR^N_\epsilon$ be a Weyl chamber. For any $f \in W^{1,p}(H)$, we have the inequality
$$ \int_{\RR^N_\epsilon} |\nabla f^*|^p \diff\mu_k \leq \int_{\RR^N_\epsilon} |\nabla f|^p \diff\mu_k .$$
\end{prop}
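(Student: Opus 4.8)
The plan is to follow the classical Pólya–Szegő strategy adapted to the weighted setting, reducing it to Talenti's general weighted rearrangement theorem \cite{T2} via the weighted isoperimetric inequality of Theorem \ref{iso}. The key point is that Talenti's result applies to any measure whose isoperimetric profile is known, and Theorem \ref{iso} combined with equation (\ref{isoconstant}) gives us exactly the isoperimetric inequality for $\mu_k$ restricted to the Weyl chamber $\RR^N_\epsilon$, with the extremal sets being precisely the balls centred at the origin intersected with the chamber (which is why the rearrangement $f^*$ is defined using those sets).

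First I would set up the coarea formula in the weighted setting: for $f\in W^{1,p}(H)$, writing $\mu(t) = \mu_k(\{|f|>t\})$ for the distribution function and using that $f^*$ has the same distribution function, one has
$$ \int_H |\nabla f|^p \diff\mu_k = \int_0^\infty \left( \int_{\{|f|=t\}} |\nabla f|^{p-1} w_k \diff\sigma \right) \diff t,$$
and a similar identity for $f^*$. Next, by Hölder's inequality on the level set $\{|f|=t\}$ together with the weighted coarea relation $-\mu'(t) = \int_{\{|f|=t\}} |\nabla f|^{-1} w_k \diff\sigma$, one obtains the pointwise (in $t$) bound
$$ p(\{|f|>t\}) = \int_{\{|f|=t\}} w_k \diff\sigma \leq \left( \int_{\{|f|=t\}} |\nabla f|^{p-1} w_k \diff\sigma \right)^{1/p} \left( -\mu'(t) \right)^{1-1/p}.$$
Then the weighted isoperimetric inequality from Theorem \ref{iso} bounds $p(\{|f|>t\})$ from below by a constant times $\mu(t)^{1-1/(N+2\gamma)}$; but crucially, for the rearranged function $f^*$, whose superlevel sets are exactly the extremal sets $\Omega^*$, this isoperimetric inequality is an \emph{equality}, so that $p(\{f^*>t\}) = c\,\mu(t)^{1-1/(N+2\gamma)}$ with the same constant $c$. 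Combining these, one deduces that for a.e. $t$,
$$ \int_{\{f^*=t\}} |\nabla f^*|^{p-1} w_k \diff\sigma \leq \int_{\{|f|=t\}} |\nabla f|^{p-1} w_k \diff\sigma,$$
after eliminating the common factor $(-\mu'(t))^{1-1/p}$; integrating over $t$ and applying the coarea formula again gives the claim.

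The main obstacle will be the technical justification of the coarea formula and Sard-type regularity for the level sets $\{|f|=t\}$ for $W^{1,p}$ functions in the weighted setting, and in particular handling the set where $\nabla f = 0$ or $\mu'(t)$ fails to exist — the standard approach is to invoke Talenti's theorem \cite{T2} as a black box precisely to avoid redoing this, since his framework already incorporates these measure-theoretic subtleties for general weights satisfying an isoperimetric inequality. Thus the cleanest route, which I would take, is: (i) verify that $(\RR^N_\epsilon, \mu_k)$ satisfies the hypotheses of Talenti's weighted Pólya–Szegő theorem, namely that the isoperimetric profile is that of the model balls $B_r \cap \RR^N_\epsilon$ — this is the content of Theorem \ref{iso} together with the equality case recorded after (\ref{isoconstant}); (ii) check that our definition of $f^*$ coincides with the rearrangement used in \cite{T2}; (iii) quote the conclusion. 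The homogeneity of $w_k$ and the fact that $w_k$ vanishes on the chamber walls (so that the "boundary at the walls" contributes nothing to $p(\cdot)$, exactly as exploited in the proof of Theorem \ref{iso}) are what make the model-ball profile match Talenti's required form.
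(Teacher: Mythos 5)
Your proposal is correct and takes essentially the same route as the paper: the paper likewise derives Proposition \ref{euclideanrearrangement} by invoking Talenti's general weighted rearrangement theorem \cite{T2}, with the required isoperimetric hypothesis supplied by Theorem \ref{iso} and the extremality of the sets $B_r\cap\RR^N_\epsilon$ (which, by the scaling of $\mu_k$ and the identity (\ref{isoconstant}), achieve equality). Your preliminary coarea/P\'olya--Szeg\H{o} sketch is just the internals of Talenti's argument, which you then correctly decide to quote as a black box, exactly as the paper does.
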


Indeed, the result in \cite{T2} proves a general rearrangement inequality on a subset of the Euclidean space with a weighted measure, as long as an isoperimetric inequality holds. Moreover, when equality in the isoperimetric inequality is given by balls, then the rearrangement function is radial.

Another consequence of Theorem \ref{iso} is the Sobolev inequality for the usual gradient $\nabla$ on the space weighted space $L^2(\mu_k)$. It is a classical fact the sharp isoperimetric inequality is equivalent to the sharp Sobolev inequality for $\nabla f\in L^1$. The proof of this equivalence is done through the co-area formula, see \cite{Mazya} for details. The general case, with $\nabla f \in L^p$, follows by taking $|f|^a$, for a suitable $a$, in the $L^1$ result.

\begin{prop} \label{classicalsobolevWeyl}
Let $\RR^N_\epsilon$ be a Weyl chamber. Let $1 \leq p < N+2\gamma$ and $q=\frac{p(N+2\gamma)}{N+2\gamma-p}$. Then there exists a constant $C>0$ such that for any $f\in C^1_c(\overline{\RR^N_\epsilon})$ we have
\begin{equation} \label{classicalsobolevWeylineq}
\left( \int_{\RR^N_\epsilon} |f|^q \diff\mu_k \right)^{1/q} \leq C \left( \int_{\RR^N_\epsilon} |\nabla f|^p \diff\mu_k \right)^{1/p}.
\end{equation}
\end{prop}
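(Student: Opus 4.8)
The plan is to derive Proposition~\ref{classicalsobolevWeyl} from the isoperimetric inequality of Theorem~\ref{iso} in the classical way, via the co-area formula. First I would treat the endpoint case $p=1$. Given $f\in C^1_c(\overline{\RR^N_\epsilon})$, write $q_1 = \frac{N+2\gamma}{N+2\gamma-1}$, and apply the layer-cake decomposition together with Minkowski's integral inequality (or directly the standard argument) to reduce $\left(\int_{\RR^N_\epsilon}|f|^{q_1}\diff\mu_k\right)^{1/q_1}$ to an integral of $\mu_k(\{|f|>t\})^{1-1/(N+2\gamma)}$ over $t>0$. The isoperimetric inequality bounds each such term by $C\,p(\{|f|>t\})$, and the co-area formula in the weighted setting, $\int_0^\infty p(\{|f|>t\})\diff t = \int_{\RR^N_\epsilon}|\nabla f|\diff\mu_k$, closes the estimate. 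Here one must be slightly careful that the level sets $\{|f|>t\}$ are (for a.e.\ $t$, by Sard's theorem) bounded Lipschitz — indeed smooth — domains contained in the closed Weyl chamber, so that Theorem~\ref{iso} applies; since $f$ has compact support this is routine.

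Next I would bootstrap from $p=1$ to general $1\le p<N+2\gamma$ by the usual power trick. Apply the $L^1$ inequality to the function $g = |f|^a$ with $a = \frac{p(N+2\gamma-1)}{N+2\gamma-p} > 1$, which is $C^1$ wherever $f\neq 0$ and can be justified in general by a standard approximation (replace $|f|$ by $(\varepsilon^2+f^2)^{1/2}-\varepsilon$, or mollify). Then $|\nabla g| = a|f|^{a-1}|\nabla f|$, and H\"older's inequality with exponents $p$ and $p' = \frac{p}{p-1}$ applied to $\int_{\RR^N_\epsilon}|f|^{a-1}|\nabla f|\diff\mu_k$ turns the right-hand side into $\|\nabla f\|_{L^p(\mu_k)}$ times a power of $\|f\|_{L^{q}(\mu_k)}$; the exponents have been chosen precisely so that $a q_1 = (a-1)p' = q$, and the power of $\|f\|_{L^q(\mu_k)}$ that appears can be absorbed into the left-hand side (it is $\|f\|_q^{q/q_1 - 1}$, with $q/q_1 - 1 < q/q_1$), yielding \eqref{classicalsobolevWeylineq}.

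The main obstacle I anticipate is not the algebra of exponents but the regularity/approximation bookkeeping: making sure that (i) the co-area formula and the isoperimetric inequality are legitimately applicable to the super-level sets $\{|f|>t\}$ for a.e.\ $t$ — this needs the a.e.\ regularity of level sets of a $C^1_c$ function together with the fact that $w_k$ vanishing on the walls does not spoil the weighted co-area identity on $\overline{\RR^N_\epsilon}$ — and (ii) that $|f|^a$ is an admissible test object despite $|f|$ being only Lipschitz. Both points are handled by standard density arguments (first prove the inequality for nonnegative $f$ vanishing near the walls, then pass to the limit), so the proof remains elementary; I would state these reductions briefly and refer to \cite{Mazya} for the co-area machinery, exactly as the surrounding text already does.
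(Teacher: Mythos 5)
Your proposal follows exactly the route the paper takes: the paper derives Proposition \ref{classicalsobolevWeyl} from the isoperimetric inequality of Theorem \ref{iso} via the co-area formula in the case $p=1$, and then obtains general $1\le p<N+2\gamma$ by substituting $|f|^a$ for a suitable power $a$ into the $L^1$ result, citing \cite{Mazya} for the machinery. Your write-up simply makes explicit the exponent bookkeeping ($aq_1=(a-1)p'=q$) and the approximation issues that the paper leaves implicit, so it is correct and essentially the same argument.
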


\subsection{Sharp constants}

Before we prove the main result, we need some information about the best constants that appear in the Sobolev inequality from Proposition \ref{classicalsobolevWeyl}, so we first compute
$$ C_W := \sup \frac{\left( \int_{\RR^N_\epsilon} |f|^q \diff\mu_k \right)^{1/q}}{\left( \int_{\RR^N_\epsilon} |\nabla f|^p \diff\mu_k \right)^{1/p}}.$$
The choice of Weyl chamber $\RR^N_\epsilon$ in this definition does not matter because we can obtain any Weyl chamber from another one through an element $g$ of the reflection group $G$. Firstly, it is clear that it is enough to consider only non-negative functions $f$ because replacing $f$ by $|f|$ leaves the quotient in the definition of $C_W$ invariant. By Proposition \ref{euclideanrearrangement}, since $\int_{\RR^N_\epsilon} f^q \diff\mu_k = \int_{\RR^N_\epsilon} (f^*)^q \diff\mu_k$, we have 
$$ \frac{\left( \int_{\RR^N_\epsilon} |f|^q \diff\mu_k \right)^{1/q}}{\left( \int_{\RR^N_\epsilon} |\nabla f|^p \diff\mu_k \right)^{1/p}}
\leq \frac{\left( \int_{\RR^N_\epsilon} |f^*|^q \diff\mu_k \right)^{1/q}}{\left( \int_{\RR^N_\epsilon} |\nabla f^*|^p \diff\mu_k \right)^{1/p}}.$$

But for $f$ radial, say $f(x)=g(|x|)$, where $g: \RR_+ \to \RR_+$, we have
$$ \nabla f(x) = \frac{x}{|x|} g'(|x|),$$
so, using polar coordinates, we can compute
\begin{equation} \label{bestconstantclassical1} 
\frac{\left( \int_{\RR^N_\epsilon} |f|^q \diff\mu_k \right)^{1/q}}{\left( \int_{\RR^N_\epsilon} |\nabla f|^p \diff\mu_k \right)^{1/p}}
= p(B_1^\epsilon)^{\frac{1}{q}-\frac{1}{p}} \frac{\left(\int_0^\infty g(r)^q r^{N+2\gamma-1} \diff r \right)^{1/q}}{\left(\int_0^\infty |g'(r)|^p r^{N+2\gamma-1} \diff r\right)^{1/p}}.
\end{equation}
We have thus reduced the problem to maximising the functional 
\begin{equation} \label{functionalJ} 
J(g) = \frac{\left(\displaystyle\int_0^\infty g(r)^q r^{N+2\gamma-1} \diff r \right)^{1/q}}{\left( \displaystyle\int_0^\infty |g'(r)|^p r^{N+2\gamma-1} \diff r \right)^{1/p}}.
\end{equation}
But this problem was studied by Talenti in \cite{T}, where he obtained that 
\begin{equation} \label{bestconstantclassical2} 
J(g) \leq (N+2\gamma)^{-1/p} \left(\frac{p-1}{N+2\gamma - p} \right)^{1/p'} \left[ \frac{1}{p'} B\left( \frac{N+2\gamma}{p}, \frac{N+2\gamma}{p'}\right) \right]^{-\frac{1}{N+2\gamma}},
\end{equation}
where $B$ is the beta function and $p'$ is the conjugate of $p$, i.e., $\frac{1}{p}+\frac{1}{p'}=1$. Equality is achieved for functions of the form
\begin{equation} \label{equalitycase} 
\varphi(r) = (a+br^{p'})^{1-\frac{N+2\gamma}{p}},
\end{equation}
where $a,b>0$. 

By considering polar coordinates in the definition of the Macdonald-Mehta constant $M_k$, it can be seen that
$$ p(B_1) = \frac{M_k}{2^{\frac{N+2\gamma}{2}-1}\Gamma\left(\frac{N+2\gamma}{2} \right)}.$$
Moreover, by a change of variables we can see that $p(B_1^\epsilon)$ does not depend on the Weyl chamber, and since the number of Weyl chambers is equal to the order of the reflection group $G$, it follows that
$$ p(B_1^\epsilon) = \frac{1}{|G|} p(B_1).$$
Therefore, combining (\ref{bestconstantclassical1}) and (\ref{bestconstantclassical2}), we obtain the value of the best constant in the Sobolev inequality (\ref{classicalsobolevWeylineq}); this is summarised in the following result. 

\begin{prop}
Let $\RR^N_\epsilon$ be a Weyl chamber. Let $1<p<N+2\gamma$, and $q=\frac{p(N+2\gamma)}{N+2\gamma-p}$. Then the best constant in the Sobolev inequality (\ref{classicalsobolevWeylineq}) is given by
\begin{align*}
C_W 
&= (N+2\gamma)^{-1/p} \left(\frac{p-1}{N+2\gamma - p} \right)^{1/p'} 
\\
&\qquad \qquad \qquad 
\cdot \left[ \frac{2^{\frac{N+2\gamma}{2}-1}p'|G|}{M_k}\frac{\Gamma(N+2\gamma)\Gamma((N+2\gamma)/2)}{\Gamma((N+2\gamma)/p)\Gamma((N+2\gamma)/p')}\right]^{\frac{1}{N+2\gamma}}.
\end{align*}
\end{prop}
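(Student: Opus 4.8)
The plan is to assemble the claimed formula for $C_W$ by straightforwardly combining the three pieces already established in the text: the reduction to radial functions via rearrangement, Talenti's one-dimensional optimization, and the explicit evaluation of the geometric constant $p(B_1^\epsilon)$. More precisely, I will start from equation~(\ref{bestconstantclassical1}), which shows that for radial $f(x)=g(|x|)$ the Sobolev quotient equals $p(B_1^\epsilon)^{1/q-1/p}\, J(g)$, where $J$ is the functional in~(\ref{functionalJ}). Since Proposition~\ref{euclideanrearrangement} (the weighted P\'olya--Szeg\H{o} inequality on the Weyl chamber, a consequence of the isoperimetric inequality of Theorem~\ref{iso}) guarantees that passing to the symmetric decreasing rearrangement $f^*$ only increases the quotient, the supremum defining $C_W$ is attained on radial functions, hence
\[
C_W = p(B_1^\epsilon)^{\frac1q-\frac1p}\,\sup_g J(g).
\]

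Next I would invoke Talenti's result~(\ref{bestconstantclassical2}), which gives the exact value of $\sup_g J(g)$ in terms of the beta function, together with the value of $p(B_1^\epsilon)$. For the latter I would first record $p(B_1)=M_k\big/\big(2^{(N+2\gamma)/2-1}\Gamma((N+2\gamma)/2)\big)$, obtained by writing the Macdonald--Mehta integral $M_k=\int_{\RR^N}e^{-|x|^2/2}\diff\mu_k$ in polar coordinates and using $\int_0^\infty r^{N+2\gamma-1}e^{-r^2/2}\diff r = 2^{(N+2\gamma)/2-1}\Gamma((N+2\gamma)/2)$; then $p(B_1^\epsilon)=\frac{1}{|G|}p(B_1)$ since $G$ acts transitively on the $|G|$ Weyl chambers and the change of variables $x\mapsto gx$ preserves $p(\cdot\cap B_1)$ by $G$-invariance of $w_k$. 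Substituting, and noting the exponent identity $\frac1q-\frac1p = -\frac{1}{N+2\gamma}$ which follows from $q=\frac{p(N+2\gamma)}{N+2\gamma-p}$, turns the factor $p(B_1^\epsilon)^{1/q-1/p}$ into
\[
\left(\frac{2^{(N+2\gamma)/2-1}|G|}{M_k}\,\Gamma\!\big((N+2\gamma)/2\big)\right)^{\frac{1}{N+2\gamma}}.
\]

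Finally I would multiply this by Talenti's expression and simplify the beta-function factor using $B(a,b)=\Gamma(a)\Gamma(b)/\Gamma(a+b)$ with $a=(N+2\gamma)/p$, $b=(N+2\gamma)/p'$, so that $a+b=N+2\gamma$ and $B\big((N+2\gamma)/p,(N+2\gamma)/p'\big)^{-1/(N+2\gamma)} = \big(\Gamma(N+2\gamma)/(\Gamma((N+2\gamma)/p)\Gamma((N+2\gamma)/p'))\big)^{1/(N+2\gamma)}$. Collecting the powers of $2$, of $p'$, and of $|G|/M_k$ inside a single bracket raised to $1/(N+2\gamma)$, and keeping the prefactor $(N+2\gamma)^{-1/p}\big(\frac{p-1}{N+2\gamma-p}\big)^{1/p'}$ from Talenti, yields exactly the stated formula.

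The computation is almost entirely bookkeeping; there is no serious obstacle, since the analytic content (the rearrangement inequality and Talenti's optimization) is already available. The only points requiring a little care are: checking the exponent arithmetic $\frac1q-\frac1p=-\frac{1}{N+2\gamma}$ so that the geometric factor enters with the correct power; correctly evaluating the radial Gaussian integral that produces $p(B_1)$; and tracking the base-$2$ exponents and the factor $p'$ when merging Talenti's bracket with the $p(B_1^\epsilon)$ bracket. The restriction $p>1$ is needed because Talenti's formula~(\ref{bestconstantclassical2}) and the extremizers~(\ref{equalitycase}) require it (and the P\'olya--Szeg\H{o} step is only the classical $p>1$ one); the $p=1$ endpoint is excluded here and is exactly the case handled separately via the isoperimetric inequality.
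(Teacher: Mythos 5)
Your proposal is correct and follows essentially the same route as the paper: reduce to radial functions via the weighted rearrangement inequality of Proposition \ref{euclideanrearrangement}, pass to the one-dimensional functional $J(g)$ via polar coordinates as in (\ref{bestconstantclassical1}), apply Talenti's bound (\ref{bestconstantclassical2}), and evaluate $p(B_1^\epsilon)=p(B_1)/|G|$ from the Macdonald--Mehta integral. The exponent identity $\frac1q-\frac1p=-\frac{1}{N+2\gamma}$ and the beta-function simplification are exactly the bookkeeping the paper carries out.
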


\begin{remark}
This generalises the results of \cite{CRO} where the authors consider the weighted Sobolev inequality with monomial weight $|x_1|^{A_1} \cdot \ldots \cdot |x_l|^{A_l}$; this corresponds to the Dunkl weight with root system $R=\{e_1, \ldots, e_l\}$, where $1\leq l \leq N$ and $e_1, \ldots, e_N$ is the standard basis of $\RR^N$.
\end{remark}

We now turn to the Sobolev inequality on the whole space $\RR^N$, which follows from the above.

\begin{prop}
Let $1<p<N+2\gamma$ and $g=\frac{p(N+2\gamma)}{N+2\gamma-p}$. Then there exists a constant $C_{CS}>0$ such that for any $f\in C_c^\infty(\RR^N)$, we have the inequality
\begin{equation} \label{classicalsobolevineqRR^N}
\norm{f}_q \leq C_{CS} \norm{\nabla f}_p.
\end{equation}
Moreover, the sharp constant $C_{CS}$ satisfies
$$ C_{CS} = C_W,$$
with equality if and only if $f$ is supported on the closure of a Weyl chamber, where it takes the form (\ref{equalitycase}).
\end{prop}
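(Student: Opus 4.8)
The plan is to establish the inequality \eqref{classicalsobolevineqRR^N} together with the identification of the sharp constant by reducing the problem on $\RR^N$ to the problem on a single Weyl chamber, which was already settled in Proposition \ref{classicalsobolevWeyl} and the subsequent computation of $C_W$. First I would prove the inequality itself: given $f\in C_c^\infty(\RR^N)$, write $\RR^N$ (up to a null set) as the disjoint union of the translates $gH$ of a fixed Weyl chamber $H=\RR^N_\epsilon$ over $g\in G$, and note that the restriction $f|_{\overline{gH}}$ lies in $C_c^1(\overline{gH})$ (there is no boundary contribution since $\nabla f$ is merely required to be in $L^p$, and along the walls $w_k$ vanishes). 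Applying Proposition \ref{classicalsobolevWeyl} on each chamber and summing, one gets
\begin{align*}
\norm{f}_q^q &= \sum_{g\in G} \int_{gH} |f|^q \diff\mu_k
\leq C_W^q \sum_{g\in G} \left( \int_{gH} |\nabla f|^p \diff\mu_k \right)^{q/p}.
\end{align*}
Since $q\geq p$ we have $\sum_g x_g^{q/p} \leq \left(\sum_g x_g\right)^{q/p}$ for nonnegative $x_g$, and the right-hand side is bounded by $C_W^q \left( \int_{\RR^N} |\nabla f|^p \diff\mu_k \right)^{q/p} = C_W^q \norm{\nabla f}_p^q$. This gives $C_{CS}\leq C_W$.

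For the reverse inequality $C_{CS}\geq C_W$, the idea is to exhibit near-extremal functions on $\RR^N$ built from the one-dimensional extremizer \eqref{equalitycase}. The function $\varphi(r)=(a+br^{p'})^{1-(N+2\gamma)/p}$ is radial on all of $\RR^N$ but is not compactly supported and only lies in the relevant spaces in a limiting sense, so one truncates: take $\varphi_n(x) = \eta_n(|x|)\varphi(|x|)$ with a smooth cutoff, check that as $n\to\infty$ the quotient $\norm{\varphi_n}_q/\norm{\nabla\varphi_n}_p$ converges to $J(\varphi)\cdot p(B_1)^{1/q-1/p}$ computed via polar coordinates over all of $\RR^N$ — note this uses $p(B_1)$, not $p(B_1^\epsilon)$. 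One then checks, using $p(B_1^\epsilon)=\frac{1}{|G|}p(B_1)$ and the explicit forms of $C_W$ and $J$, that $p(B_1)^{1/q-1/p} J(\varphi) = C_W$; indeed $p(B_1)^{1/q-1/p} = |G|^{1/q-1/p} p(B_1^\epsilon)^{1/q-1/p}$ and $1/q-1/p = -1/(N+2\gamma)$, so the factor $|G|^{1/(p)-1/q}=|G|^{1/(N+2\gamma)}$ is exactly what converts the chamber constant $p(B_1^\epsilon)^{1/q-1/p}$ appearing in \eqref{bestconstantclassical1} into the whole-space normalization — this is consistent because a radial function on $\RR^N$ restricted to one chamber picks up only $p(B_1^\epsilon)$, but summed over all $|G|$ chambers gives back $p(B_1)$. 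Hence $C_{CS}\geq C_W$, and combined with the first part, $C_{CS}=C_W$.

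For the equality statement, suppose $f\in C_c^\infty(\RR^N)$ achieves $\norm{f}_q = C_{CS}\norm{\nabla f}_p$. Tracing back through the argument, equality must hold in the superadditivity step $\sum_g x_g^{q/p}\leq (\sum_g x_g)^{q/p}$, which (since $q>p$) forces all but one of the $x_g = \int_{gH}|\nabla f|^p\diff\mu_k$ to vanish, so $\nabla f$, and hence $f$ (being compactly supported), is supported in the closure of a single chamber $gH$. On that chamber equality in Proposition \ref{classicalsobolevWeyl} holds, and equality in the chain \eqref{bestconstantclassical1}–\eqref{bestconstantclassical2} forces $f$ to be radial (equality in the Pólya–Szegő rearrangement inequality from Proposition \ref{euclideanrearrangement}, where equality is attained by balls hence radial functions) and to be of the form \eqref{equalitycase} (Talenti's equality case for the functional $J$). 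The main obstacle I expect is the reverse-inequality step: one must carefully justify that the truncated, compactly supported approximants $\varphi_n$ do not lose constant in the limit — i.e., that both $\norm{\varphi_n}_q$ and $\norm{\nabla\varphi_n}_p$ converge to the expected integrals over $\RR^N$ with error going to zero. This requires knowing the decay of $\varphi$ (the exponent $1-(N+2\gamma)/p$ makes $\varphi\in L^q$ and $\nabla\varphi\in L^p$ borderline-integrable at infinity, so the cutoff tails are controlled but need an explicit estimate), and is the one place where the argument is not purely formal; the rest is bookkeeping with the explicit constants already assembled in the subsection.
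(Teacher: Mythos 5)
Your proof of the upper bound $C_{CS}\leq C_W$ is essentially the paper's argument: decompose $\RR^N$ into Weyl chambers, apply the chamber inequality, and use $\sum_g x_g^{q/p}\leq\bigl(\sum_g x_g\bigr)^{q/p}$ (equivalently $\norm{\cdot}_{\ell^q}\leq\norm{\cdot}_{\ell^p}$), whose equality case also drives the characterisation of extremisers exactly as in the paper. The genuine gap is in your reverse inequality $C_{CS}\geq C_W$. You propose globally radial near-extremisers $\varphi_n$ on all of $\RR^N$ and claim $p(B_1)^{1/q-1/p}J(\varphi)=C_W$; this is false, and the error is a sign slip in the exponent of $|G|$. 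Since $p(B_1)=|G|\,p(B_1^\epsilon)$ and $\tfrac1q-\tfrac1p=-\tfrac{1}{N+2\gamma}<0$, one has
$$p(B_1)^{1/q-1/p}=|G|^{-\frac{1}{N+2\gamma}}\,p(B_1^\epsilon)^{1/q-1/p}<p(B_1^\epsilon)^{1/q-1/p},$$
so a radial function on $\RR^N$ yields the ratio $|G|^{-1/(N+2\gamma)}C_W$, strictly smaller than $C_W$ whenever $|G|>1$. Your own heuristic reveals the problem: passing from one chamber to all $|G|$ chambers multiplies $\norm{f}_q$ by $|G|^{1/q}$ but $\norm{\nabla f}_p$ by $|G|^{1/p}$, and since $1/q<1/p$ the quotient goes \emph{down}, not up. Indeed, the radial supremum is precisely the lower bound $|G|^{-1/(N+2\gamma)}C_{CS}$ appearing in Theorem \ref{sobolevconstantsthm}; if radial functions attained $C_W$ the two bounds there would coincide, which they do not.

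To repair the step, the near-extremisers must be supported in the closure of a \emph{single} Weyl chamber $H$: take (truncations and mollifications of) $\varphi(|x|)=(a+b|x|^{p'})^{1-(N+2\gamma)/p}$ restricted to $\overline{H}$ and extended by zero. For such functions the quotient is $p(B_1^\epsilon)^{1/q-1/p}J(\varphi)$, which tends to $C_W$ along Talenti's extremising sequence; the vanishing of $w_k$ on the chamber walls is what allows these functions to be approximated in the relevant norms by admissible test functions despite the jump across the walls. Your truncation-at-infinity concerns are legitimate but secondary; the primary issue is choosing the wrong trial functions.
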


\begin{proof}
Fix $f\in C_c^\infty(\RR^N)$ and for any Weyl chamber $H$ let $\restr{f}{H}$ denote the restriction of $f$ to $H$. From the Sobolev inequality applied on each Weyl chamber, we have
\begin{align*}
\norm{f}_q 
= \left(\sum_H \norm{\restr{f}{H}}_q^q \right)^{1/q} 
\leq C_W \left(\sum_H \norm{\nabla \restr{f}{H}}_p^q \right)^{1/q}.
\end{align*}
Now we apply the inequality $\norm{y}_{l_q} \leq \norm{y}_{l_p}$ which holds on finite dimensional spaces since $p\leq q$, with equality if and only if $y=(0,\ldots, 0, 1, 0, \ldots, 0)$. It follows that
$$ \norm{f}_q \leq C_W \left(\sum_H \norm{\nabla \restr{f}{H}}_p^p \right)^{1/p}
=C_W \norm{\nabla f}_p.$$
This proves that the Sobolev inequality (\ref{classicalsobolevineqRR^N}) holds, and also that $C_{CS} \leq C_W$. Moreover, following the proof backwards we see that equality does hold if and only if $f$ is supported on $\overline{H}$ for some Weyl chamber $H$, and we have equality in the corresponding Weyl chamber inequality, i.e., 
$$f(x)=(a+b|x|^{p'})^{1-\frac{N+2\gamma}{p}} \qquad \forall x\in H,$$ 
for some $a,b>0$. 
\end{proof}

\begin{remark}
In view of Lemma \ref{dunkl-euclidean-dirichletform}, this result for $p=2$ provides yet another proof of Theorem \ref{sobolev}.  
\end{remark}

We now specialise to the case $p=2$. This limitation comes from the fact that we rely on the Dirichlet form method and Lemma \ref{dunkl-euclidean-dirichletform}. It remains an open question to prove the inequality 
$$ \norm{\nabla f}_p \leq \norm{\nabla_k f}_p$$
for $p\neq 2$. If this holds, then the estimates below generalise immediately. 

We are now ready to prove the main result.

\begin{proof} [Proof of Theorem \ref{sobolevconstantsthm}]

The upper bound follows immediately from the above, using Lemma \ref{dunkl-euclidean-dirichletform}. Indeed, we have
$$ \frac{\norm{f}_q}{\norm{\nabla_k f}_2} \leq \frac{\norm{f}_q}{\norm{\nabla f}_2},$$
and taking supremum on both sides we obtain $C_{DS} \leq S_{SC}$. In the case $p=2$ the value of $C_W$ simplifies and we have indeed
$$ C_{CS} = C_W 
=\left(\frac{2}{(N+2\gamma)(N+2\gamma-2)} \right)^{1/2} 
\cdot \left[ \frac{|G|}{M_k}\frac{\Gamma(N+2\gamma)}{\Gamma((N+2\gamma)/2)}\right]^{\frac{1}{N+2\gamma}}.$$

For the lower bound, we note that
$$ \displaystyle\sup_{f \text{ radial}} \frac{\norm{f}_q}{\norm{\nabla_k f}_2} \leq C_{DS}.$$
The left hand side can be computed as above, making use of the functional (\ref{functionalJ}) and Talenti's result. We obtain
\begin{align*}
\displaystyle\sup_{f \text{ radial}} \frac{\norm{f}_q}{\norm{\nabla_k f}_2}
= \left(\frac{2}{(N+2\gamma)(N+2\gamma-2)} \right)^{1/2} 
\cdot \left[ \frac{1}{M_k}\frac{\Gamma(N+2\gamma)}{\Gamma((N+2\gamma)/2)}\right]^{\frac{1}{N+2\gamma}}.
\end{align*}
This completes the proof.
\end{proof}

\noindent \textbf{Acknowledgements.} The author wishes to thank Charles F Dunkl for pointing out useful references. 

\bibliographystyle{plain}
\bibliography{ref}

\end{document}